\documentclass[10 pt]{amsart}

\usepackage{amsmath}
\usepackage{amsfonts}
\usepackage{amssymb}
\usepackage{amstext}
\usepackage{amsbsy}
\usepackage{amsopn}
\usepackage{amsthm}
\usepackage{amsxtra}
\usepackage{graphicx}
\usepackage{hyperref}
\usepackage{enumitem}
\usepackage{xcolor}
\usepackage{enumitem}   
\usepackage[normalem]{ulem}
\usepackage{soul}
\usepackage{tikz}
\usepackage[justification=centering]{caption}

\newtheorem{theorem}{Theorem}[section]
\newtheorem{lemma}[theorem]{Lemma}
\newtheorem{proposition}[theorem]{Proposition}

\newtheorem{corollary}[theorem]{Corollary}
\theoremstyle{definition}
\newtheorem{definition}[theorem]{Definition}

\newtheorem*{comment*}{Comment}

\newcommand{\R}{\mathbb{R}}
\newcommand{\Z}{\mathbb{Z}}

\newcommand{\N}{\mathbb{N}}

\newcommand{\Aut}{\mathrm{Aut}}

\newcommand{\CA}{\mathcal{A}}

\newcommand{\CF}{\mathcal{F}}

\newcommand{\CL}{\mathcal{L}}

\newcommand{\CP}{\mathcal{P}}

\newcommand{\one}{\boldsymbol{1}}

\newcommand{\aut}{\mathrm{Aut}}
\newcommand{\Id}{\mathrm{Id}}

\newcommand{\tp}{\mathrm{top}}

\newcommand{\interior}[1]{%
  {\kern0pt#1}^{\mathrm{o}}%
}

\title{Stable covers of subshifts}
\author{Solly Coles}
\address{Department of Mathematics, Tufts University, 177 College Avenue, Medford, MA 02155}
\author{Van Cyr}
\address{Department of Mathematics, Bucknell University, 1 Dent Drive, Lewisburg, PA 17837}
\author{Bryna Kra}
\address{Department of Mathematics, Northwestern University, 2033 Sheridan Road, 
Evanston, IL 60208}
\author{Ronnie Pavlov}
\address{Department of Mathematics, University of Denver, 2390 S. York Street, 
Denver, CO 80210}

\thanks{BK and RP gratefully acknowledge the support of the Simons Foundation and BK the support of NSF grant DMS-2348315.}
\begin{document}

\maketitle

\begin{abstract}
Given a dynamical system, a characteristic measure is a Borel probability measure invariant under all of its automorphisms.  Frisch and Tamuz asked if every symbolic system supports such a measure.  Motivated by this problem, we study the natural cover of a subshift by its shift of finite type approximations and two senses in which this  cover can be said to stabilize.  The first is in terms of entropy decay and the second in terms of periodic points.  We show that the first type of stabilization gives a new characterization of the class of language stable shifts and demonstrates that there is a mechanism for producing a characteristic measures that relies only on entropy differences.  For the second type of stabilization, we show that this defines a new class of subshifts, invariant under conjugacies, that have characteristic measures.
\end{abstract}

\section{Characteristic measures}

Given a symbolic dynamical system $(X,\sigma)$, there is a canonical sequence of approximations to $X$ by simpler systems: subshifts of finite type.  We study the interplay between the properties of $X$ and various senses in which this approximating sequence stabilizes.  Our motivation to study stabilizations is to make progress on the characteristic measure problem introduced by Frisch and Tamuz~\cite{FT}.  They define a measure to be {\em characteristic} for a topological system $(X,T)$ if it is invariant under the automorphism group $\Aut(X,T)$.  Not every topological system supports such a measure, for instance it is mentioned in~\cite{FT} that the identity map acting on the Cantor set does not.  There are even examples of minimal systems that fail to support a characteristic measure~\cite{FSZ}. 

On the other hand, there are many classes of symbolic systems known to support a characteristic measure. For instance, Parry~\cite{parry} shows that any mixing shift of finite type has a unique measure of maximal entropy, which therefore must be a characteristic measure. Though not phrased in this terminology, it follows from the Krylov-Bogolubov theorem that any system with an amenable automorphism group supports such a measure. It is easy to check that any symbolic system with a periodic point also has a characteristic measure. Frisch and Tamuz~\cite{FT} show that any symbolic system with zero entropy supports one. They also pose the general question~\cite[Question 1.3]{FT}: does every symbolic $\mathbb{Z}$-system support a characteristic measure?

A fruitful approach to this problem has been to make use of the {\em SFT cover} $\{X_n\}_{n\in\N}$ of a subshift $(X, \sigma)$.  This is a canonical sequence $(X_n,\sigma_n)$ 
of shifts of finite type such that   
\[
X = \bigcap_{n\in\N} X_n
\]
Using this cover, \cite{CK} introduces the class of language stable shifts, where the subshifts in the SFT cover stay constant for arbitrarily long runs.  Every language stable shift supports a characteristic measure~\cite{CK}. 
These shifts were further studied in a quantitative way in~\cite{CKP}, where it is shown that factors of language stable shifts have characteristic measures, provided the the approximating sequence in the SFT cover satisfies a well appoximability property. 
 
 In this work, we introduce two notions of stability for the SFT cover of a shift, that imply the existence of a characteristic measure.  Each class defined by these conditions contains all  language stable shifts, but new methods are used to show the existence of characteristic measures.   Define a shift $(X, \sigma)$ to be {\em period stable} if for all $m\in\N$, there exist $n, p\in\N$ such that the set of periodic points of minimal periodic $p$ in $X_n$ is the same as the set of periodic points of minimal periodic $p$ in $X_{n+m}$ (see Definition~\ref{def:periodstable}).  In Section~\ref{sec:periodic}, we show that this condition suffices for the existence of a characteristic measure. 
\begin{theorem}
\label{th:periodic}
    Every period stable subshift supports a characteristic measure. 
\end{theorem}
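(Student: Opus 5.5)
The plan is to build the characteristic measure as a weak$^*$ limit of uniform measures on finite sets of periodic points supplied by period stability. I will use the facts that each $X_k$ is the SFT defined by the allowed words of length $k$, that $X_{k+1}\subseteq X_k$, and that the nonempty sets of minimal-period-$p$ points in period stability are nonempty (which I expect the formal Definition~\ref{def:periodstable} to require).

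\emph{Reduction.} If $X$ has a periodic point, the uniform measure on its orbit is characteristic, since automorphisms permute periodic orbits of each minimal period. So assume $X$ has no periodic points. For each $m$, choose $n_m,p_m$ with $\mathrm{Per}^*_{p_m}(X_{n_m})=\mathrm{Per}^*_{p_m}(X_{n_m+m})=:P_m$, where $\mathrm{Per}^*_p$ denotes points of minimal period $p$. By monotonicity of the cover, $P_m=\mathrm{Per}^*_{p_m}(X_k)$ for every $n_m\le k\le n_m+m$.

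\emph{Claim: $n_m\to\infty$.} Otherwise some pair $(n,p)$ occurs for arbitrarily large $m$. Then $\mathrm{Per}^*_p(X_k)=\mathrm{Per}^*_p(X_n)$ for all $k\ge n$. This nonempty finite set lies in $\bigcap_k X_k=X$, which contradicts the assumption that $X$ has no periodic points. Let $\mu_m$ be the uniform probability measure on the finite set $P_m$.

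\emph{Invariance of $\mu_m$.} Fix $\varphi\in\Aut(X)$. Let $\varphi$ and $\varphi^{-1}$ be given by block maps of radius $r$, and extend them to arbitrary local rules $\tilde\varphi,\tilde\psi$ on the full shift, so both are continuous. For $k\ge 4r+1$, every word of length $4r+1$ of a point of $X_k$ lies in $\mathcal L(X)$, so two things hold:
\begin{itemize}
\item $\tilde\psi\circ\tilde\varphi=\Id$ on $X_k$, because this identity depends only on such words;
\item $\tilde\varphi(X_k)\subseteq X_{k-2r}$, because each length-$(k-2r)$ word of the image is the image of a length-$k$ word of $\mathcal L(X)$, hence lies in $\mathcal L(X)$.
\end{itemize}
Now take $m\ge 2r$ and $n_m\ge 4r+1$, which holds for large $m$ by the claim. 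Then $\tilde\varphi$ maps $P_m\subseteq X_{n_m+m}$ into $p_m$-periodic points of $X_{n_m+m-2r}$. Minimal period is preserved because $\tilde\psi\tilde\varphi=\Id$ forces the period of $x$ to divide that of $\tilde\varphi(x)$. Hence $\tilde\varphi(P_m)\subseteq P_m$. Since $\tilde\varphi$ is injective on $P_m$ and $P_m$ is finite, $\tilde\varphi$ permutes $P_m$, so $\tilde\varphi_*\mu_m=\mu_m$ for all large $m$.

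\emph{Limit.} Take a weak$^*$ limit point $\mu$ of $(\mu_m)$. Since $\mu_m$ is supported on $X_{n_m}$, the sets $X_k$ are closed and decreasing, and $n_m\to\infty$, the measure $\mu$ is supported on $X$. Continuity of $\tilde\varphi$ gives $\tilde\varphi_*\mu=\mu$, and $\tilde\varphi=\varphi$ on $X$, so $\varphi_*\mu=\mu$. This holds for every automorphism, including $\sigma$, so $\mu$ is a characteristic measure.

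The main obstacle is bookkeeping around how the automorphism interacts with the cover. It must map the stable periodic set into itself, which needs the "window" of length $m\ge 2r$ and $n_m$ large. The alternative, where $n_m$ stays bounded, has to be disposed of by producing a genuine periodic point of $X$. Nonemptiness of $P_m$ must also be checked against the precise definition.
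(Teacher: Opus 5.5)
Your core argument is the same as the paper's. You take uniform measures $\mu_m$ on $P_m$ (the paper's $\CP_m$), where $P_m=\CP_{p_m}(X_{n_m})=\CP_{p_m}(X_{n_m+m})$. For $m\ge 4r$ the extended code is injective on $X_{n_m+m}$, because $\tilde\psi\circ\tilde\varphi=\Id$ there. It also maps $X_{n_m+m}$ into $X_{n_m+m-2r}\subseteq X_{n_m}$, so it permutes $P_m$, and you finish with a weak$^*$ limit. The paper closes the permutation step by a cardinality count rather than your minimal-period argument; the two are equivalent. Your explicit extension $\tilde\varphi$ to the full shift makes the passage from $\tilde\varphi_*\mu_m=\mu_m$ to $\varphi_*\mu=\mu$ cleaner than in the paper.

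The detour through ``$X$ has no periodic points'' and the claim $n_m\to\infty$ is flawed, but you do not need it.

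\emph{The reduction is wrong as stated.} The uniform measure on a single periodic orbit need not be characteristic. On the full $2$-shift, the symbol flip is an automorphism exchanging the fixed points $\cdots000\cdots$ and $\cdots111\cdots$. You would need the uniform measure on all of $\CP_p(X)$.

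\emph{The proof of the claim has a hole.} If $n_m$ is bounded, some value $n$ repeats. Nothing forces $p_m$ to stay bounded along that subsequence, so you do not get a single pair $(n,p)$ that works for arbitrarily large $m$.

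\emph{The fix.} Everything you use about $n_m$ being large really concerns $n_m+m$. Since $P_m\subseteq X_{n_m+m}$, the facts $\tilde\psi\circ\tilde\varphi=\Id$ on $P_m$ and $\tilde\varphi(P_m)\subseteq X_{n_m+m-2r}$ only need $n_m+m\ge 4r+1$, which holds for every $m\ge 4r$. Likewise $\mu_m$ is supported on $X_{n_m+m}$ with $n_m+m\ge m\to\infty$, which already puts the support of $\mu$ inside $X$. This is how the paper argues, via $\mu_m([w])=0$ for $m>|w|$. Delete the reduction and the claim, make this substitution, and the proof is complete.

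Your caveat about nonemptiness is well placed. Definition~\ref{def:periodstable} does not state it, but the paper's proof also tacitly needs $\CP_m\neq\emptyset$ in order to define $\mu_m$.
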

 
It is easy to check that any shift that has at least one periodic point is period stable, and so the theorem is most interesting in the case that $X$ does not have any periodic points.  In this case, the terms of the SFT cover $\{X_n\}_{n=1}^{\infty}$ provide a ``halo of periodic points,'' that are not actually elements of $X$ but nevertheless allow us to find a characteristic measure for $X$. 
Furthermore, any language stable shift $X$ (see~\cite{CK}) has the property that for all $m\in\N$ there exists $n$ such that $X_{n+m}=X_n$ and so in particular, any such shift is also period stable.  We show that our result goes beyond the cases of a shift with periodic points or a language stable shift, as both such classes are already known to support a characteristic measure. 
In Section~\ref{sec:construction},  we build an example to show that the class of period stable shifts contains a subshift that has no periodic points and is not language stable.

In~\cite{CK}, it is also shown that all language stable shifts have a characteristic measure that is a measure of maximal entropy.  Though we show that all period stable shifts also have a characteristic measure, the measure we produce is not necessarily one of maximal entropy.

Turning to a different type of stability, we consider of the entropy of shifts in the the SFT cover.  We define the class of {\em entropy stable shifts} in Section~\ref{sec:entropy}, and show that again this type of stability gives rise to a characteristic measure. 
\begin{theorem}
\label{th:etrnopy}
    Every topologically mixing entropy stable subshift supports a characteristic measure. 
\end{theorem}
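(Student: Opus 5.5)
We have to guess what "entropy stable" means. The abstract says this stabilization characterizes language stable shifts, and that it gives a way to produce characteristic measures using only entropy differences. A plausible definition is: for every $m$ there is an $n$ with $h_{\tp}(X_n)-h_{\tp}(X_{n+m})$ small in a quantitative sense, say $h_{\tp}(X_n)-h_{\tp}(X_{n+m}) < \epsilon_n$ for a prescribed rate $\epsilon_n \to 0$. I will prove the theorem under that reading. The strategy has three steps: use entropy closeness to show $X_n$ and $X_{n+m}$ share their top mixing component; turn that into language stability; then invoke~\cite{CK}.

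\emph{Step 1: reduce to a single mixing component.} Since $X$ is topologically mixing, for all large $n$ the shift $X_n$ has a unique irreducible component $Y_n$ containing $X$. We can take this component to be mixing, by passing to the component of the graph presentation that contains the mixing subshift $X$. The other components of $X_n$ are spurious.

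\emph{Step 2: compare $X_n$ and $X_{n+m}$.} The two shifts are nested: $X_{n+m}\subseteq X_n$. Suppose $Y_{n+m}\subsetneq Y_n$. A standard result for mixing SFTs (Lind--Marcus) says that a proper subshift of a mixing SFT has strictly smaller entropy. We need the quantitative form: if some $n$-word $w$ of $Y_n$ is forbidden in $Y_{n+m}$, then $h(Y_n)-h(Y_{n+m})$ is at least an explicit amount. By counting with inserted copies of $w$, that amount is roughly $e^{-C(n+m)}$, up to factors depending on the mixing gap. Entropy stability, suitably defined, would rule this out. We would then get $Y_{n+m}=Y_n$ for arbitrarily long runs $m$.

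\emph{Step 3: construct the measure.} From Step 2, the eventual mixing components behave exactly like the stable stretches in a language stable shift. The measure of maximal entropy $\mu_n$ of $Y_n$ is unique (Parry). Any automorphism of $X$ has a block code of some radius $r$. For $m\gg r$ that code extends to a map $X_{n+m}\to X_n$, and this map should send $Y_{n+m}=Y_n$ into $Y_n$. On a stable stretch the extended map is therefore an entropy-preserving factor map $Y_n \to Y_n$, so it preserves $\mu_n$. Take any weak-$*$ limit of $\mu_n$ along stable $n$. Its support lies in $\bigcap X_n = X$, and it is invariant under every automorphism. Hence it is a characteristic measure.

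The main obstacle is Step 2. The entropy drop from removing a single word from a mixing SFT must dominate the stability threshold, uniformly in $n$. This needs explicit lower bounds on entropy loss in terms of mixing times, and I would first try the Lind--Marcus perturbation estimate together with the spectral gap of the adjacency matrix. A second issue is in Step 3: a block code of $Y_n$ into itself need not be surjective. It is, however, if it has full entropy, which is the case here since it is a limit of automorphisms.
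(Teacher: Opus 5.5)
\textbf{There is a genuine gap in Step~2.} Your route is legitimate in principle: show that entropy stability forces $X_m=X_{m+j}$ on long runs, then use the language-stable argument (a range-$r$ automorphism restricts to an automorphism of $X_m$ on a run of length $>4r$, so it fixes the Parry measure). This is what Section~\ref{sec:same} combined with the language-stable theorem gives. But Step~2 is not proved, and under the definition you guessed it cannot be.

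The paper's definition is: for all $\varepsilon>0$ and $\ell,j\ge 1$ there is $m$ with $|h_\tp(X_m)-h_\tp(X_{m+j})|<\Xi(\varepsilon,\ell,j)=\varepsilon^2(3/4)^{2\ell}\xi(s_m)^{-2}$, where $s_m=|\CL_m(X_m)|$. To conclude $X_m=X_{m+j}$ you need a uniform lower bound on the entropy lost whenever $X_m\supsetneq X_{m+j}$. The paper proves one in Theorem~\ref{th:entropy-drop} and its corollary: recode $X_m$ as a nearest-neighbor SFT on $s_m$ letters, then count insertions of a word $atva$ that contains the new forbidden word exactly once. This gives
\[
h_\tp(X_m)-h_\tp(X_{m+j})>(\ln 2)\,s_m^{-1}a^{-9s_m-12(j+1)}.
\]
Look at the shape of this bound. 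It necessarily decays in $j$, since Lind's theorem (Lemma~\ref{lem:lind1}) shows that forbidding one long word can cost arbitrarily little entropy. It is also expressed in terms of $s_m$, which can be as large as $a^m$, rather than being of the form $e^{-C(m+j)}$. Consequently a threshold $\epsilon_n$ depending only on $n$, as in your reading, can never dominate the drop for long runs, so Step~2 fails under your own definition.

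The comparison does work for the actual definition, for two reasons. First, $\xi(s)^2>900\,s^{6s^2+6}$ is super-exponential in $s$. Second, $\varepsilon$ may be chosen depending on $j$. For example, $\varepsilon=a^{-6(j+1)}$ puts $\Xi$ below the drop bound, with $a$ the number of letters occurring in $X$, so that $a\le s_m$. Without an explicit drop estimate matched against the threshold in this way, ``entropy stability, suitably defined, would rule this out'' is an assumption, not an argument.

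There are also smaller issues.
\begin{itemize}
\item Step~1 is unnecessary: $X$ mixing already makes each $X_n$ mixing.
\item The word distinguishing $X_n$ from $X_{n+m}$ has length in $(n,n+m]$, not $n$, because $\CL_n(X_{n+m})=\CL_n(X_n)=\CL_n(X)$.
\item In Step~3, bijectivity of $\varphi$ on $X_n$ comes from $\Phi^{-1}\circ\Phi$ and $\Phi\circ\Phi^{-1}$ both being the identity on $X_{n+m}=X_n$ once $n+m>4r$. It does not come from being a ``limit of automorphisms''.
\end{itemize}

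\textbf{The paper's proof takes a different route.} It deliberately avoids language stability. It proves Theorem~\ref{th:main}, an effective intrinsic ergodicity estimate, by combining Kadyrov's inequality with explicit bounds, in terms of $s$, on Bowen's spectral-gap constants. The estimate says: on a mixing SFT, any invariant measure with entropy within $\Xi$ of the topological entropy agrees with the Parry measure to within $\varepsilon$ on cylinders of length at most $\ell$. Since $\phi(X_{m+j})\subseteq X_{m+j-2R}\subseteq X_m$, both $\mu_{m+j}$ and $\phi_*\mu_{m+j}$ are measures on $X_m$ with entropy within $\Xi$ of $h_\tp(X_m)$. Hence both are close to $\mu_m$, and so to each other, and any weak-$*$ limit is $\phi$-invariant. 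This explains the form of $\Xi$. It also uses only entropy differences, not the fact that block codes extend to automorphisms of the $X_m$, which your route depends on.
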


In Section~\ref{sec:same}, we show that the class of entropy stable shifts is, in fact, a new characterization of the class of language stable shifts.  Although these two classes coincide, the mechanism used to show that entropy stable shifts have characteristic measures depends only on the entropy drops in terms in the SFT cover.  While not providing a new class of shifts not previously known to have characteristic measures, the method used in~\cite{CK} fundamentally requires a symbolic structure for showing language stable shifts have characteristic measures: the proof relies in a crucial way on the fact that automorphisms of language stable shifts are defined by block codes and extend to automorphisms of all but finitely many terms in the SFT cover of a language stable shift. 
In contrast, our proof that entropy stable shifts have characteristic measures only makes use of the entropy drop between runs of consecutive terms in the SFT cover.  Thus our method may well extend to some non-symbolic systems, giving a way to study more general topological dynamical systems 
using a natural cover by simpler systems.

\section{Notation and background}
\subsection{Subshifts and the language}
Assume that $\CA = \CA(X)$ is a finite alphabet and write $x\in \CA^\Z$ as $x = \bigl(x(n):n\in\Z\bigr)$. 
The {\em left shift} $\sigma\colon \CA^\Z\to \CA^\Z$ is 
defined by $(\sigma x)(n)  = x(n+1)$ for all $n\in\Z$. 
If $X\subseteq\mathcal{A}^{\Z}$ is nonempty, closed and $\sigma$-invariant, then we say that $(X, \sigma)$ is a {\em subshift}. 

For each word $w\in\mathcal{A}^*$, let $|w|$ denote the length of $w$, let $w_i$ denote the $i^{th}$ letter of $w$ ($0\leq i<|w|$), and let 
$$ 
[w]:=\{x\in\mathcal{A}^{\Z}\colon x(i)=w_i \text{ for all }0\leq i<|w|\} 
$$ 
be the cylinder set determined by $w$.  

In general, we use the convention that a subscript $w_i$ to denote the $i^{th}$ letter in the finite word $w$ and $x(i)$ to denote the $i^{th}$ letter in the element (infinite word)  $x\in X$.  However, when elements of $X$ have indices, we abuse the notation and combine these (such as in Section~\ref{subsec:W1}), but this should be clear from context. 

For a subshift $(X,\sigma)$, for each $n\in\N$ we write 
$$ 
\mathcal{L}_n(X):=\{w\in\mathcal{A}^n\colon[w]\cap X\neq\emptyset\}
$$ 
for the words of length $n$ in the {\em language}  $\mathcal{L}(X):=\bigcup_{n=1}^{\infty}\mathcal{L}_n(X)$ of $X$.  
We refer to any word in the language of $X$ as an {\em admissible} word.  If $a, u, b\in \CL(X)$ and $w= aub\in\CL(X)$, with the convention that one of $a$ and $b$ may be empty, we refer to $u$ as a {\em subword} of $w$, and we refer to $a$ as a {\em prefix} of the word $w$ and $b$ as a {\em suffix} of the word $w$.
If $w\in\CL(X)$, we write $w^\infty$ for the infinite periodic word $www\dots$ to the right and $^\infty w$ for the infinite periodic word $\dots www$ to the left.  

If $\mathcal{F}\subseteq\mathcal{A}^*$ then 
$$
X_{\mathcal{F}}:=\{x\in\{0,1\}^{\Z}\colon\sigma^i(x)\notin[w]\text{ for all }i\in\Z\text{ and }w\in\mathcal{F}\}
$$
is a subshift.  Any subshift $(X, \sigma)$ can be defined by specifying its language $\CL(X)$, or equivalently by specifying its {\em canonical set of forbidden words} $\mathcal{F}(X):=\mathcal{A}^*\setminus\mathcal{L}(X)$ because $X=X_{\mathcal{F}(X)}$.  In general, however, there could be a set $\mathcal{F}\subseteq\mathcal{A}^*$ such that $X=X_{\mathcal{F}}$ but $\mathcal{F}\neq\mathcal{F}(X)$.  In this case, we say that $X$ {\em can be defined with $\mathcal{F}$}.  A subshift $(X, \sigma)$ is a {\em shift of finite type} if it can be defined with a finite set $\mathcal{F}$.  We say that a forbidden word is {\em minimal} if it has no proper subword which is a canonical forbidden word, and note that the set of minimal forbidden words is a canonical way to define a subshift.  
A shift of finite type is called {\em nearest-neighbor} if it is defined with a finite set $\mathcal{F}$ of forbidden words all of which have length at most $2$.

\subsection{The SFT cover of a subshift}
For a subshift $(X, \sigma)$ with forbidden words $\mathcal{F}(X)$, for each $n\in\N$ let $\mathcal{F}_n(X):=\mathcal{A}^n\setminus\mathcal{L}_n(X)$  denote the forbidden words of length $n$ in $X$.  Define $X_n$ to be the subshift of finite type in $\mathcal{A}^{\Z}$ whose set of forbidden words is $\bigcup_{k=1}^n\mathcal{F}_k(X)$.  The sequence $\{X_n\}_{n=1}^{\infty}$ is called the {\em SFT cover} of $X$, and it follows immediately from the definitions that 
$$ 
X_1\supseteq X_2\supseteq X_3\supseteq\dots\supseteq X_n\supseteq X_{n+1}\supseteq\dots 
$$ 
and the reader can check that $X=\bigcap_{n=1}^{\infty}X_n$.  

A subshift $(X, \sigma)$ is {\em mixing} if for all words $u,v\in\CL(X),$ 
there is some $N\in\N$ such that for all $n\geq  N$ there is some word $w\in \CL_n(X)$ such that $uwv\in\CL(X)$. 
When the subshift  $(X, \sigma)$ is mixing, then 
each $X_n$  in its SFT cover is also mixing. In that case, it follows from Parry~\cite{parry} that each $X_n$ has a unique measure of maximal entropy $\mu_n$.

\subsection{The automorphism group of a subshift}
Let $\varphi\in\Aut(X)$ be an automorphism of the subshift $(X, \sigma)$, meaning that $\varphi\colon X\to X$ is a homeomorphism and $\varphi\circ\sigma = \sigma\circ\varphi$.  Let $\Aut(X) = \Aut(X, \sigma)$ denote the group of all automorphisms of the subshift $(X, \sigma)$.  

Given an automorphism $\varphi$ of the subshift $(X, \sigma)$, by the Curtis-Hedlund-Lyndon theorem there exists $R\in\N$ and a map $\Phi\colon\mathcal{L}_{2R+1}(X)\to\mathcal{A}$ such that 
$$ (\varphi(x))(i)=\Phi(x(i-R)x(i-R+1)\dots x(i) \dots x(i+R-1)x(i+R))$$
for all $x\in X$ and all $i\in\Z$ (note that we have added commas in the word in $x$ of length $2R+1$ for clarity). 
We call $\Phi$ a {\em block map implementing $\varphi$},  $R$ the {\em range} of the block map, and $\varphi$ a {\em sliding block code} with range $R$. 
The parameter $R$ is not uniquely defined, and if $\varphi$ has range $R$ then it also has range $R'$ for any $R'\geq R$.  

Let $\Aut_R(X)$ denote  the set of all $\varphi\in\Aut(X)$ with the property that both $\varphi$ and $\varphi^{-1}$ are sliding block codes of range $R$.  It follows immediately from the definitions that  $\Aut(X)=\bigcup_{R=0}^{\infty}\Aut_R(X)$ and $\Aut_R(X)\subseteq\Aut_{R+1}(X)$ for all $R\geq 1$.  We note the following elementary lemma for use in Section~\ref{sec:periodic}. 

\begin{lemma}
    Assume that $\varphi\in\Aut(X)$ has range $R$ and that $\Phi,\Phi^{-1}\colon\mathcal{L}_{2R+1}(X)\to\mathcal{A}$ 
    are range $R$ block maps that implement $\varphi, \varphi^{-1}$, respectively.  Then $\Phi^{-1}\circ\Phi$ implements the identity map $\Id\colon X\to X$ as a range $2R$ block code. 
\end{lemma}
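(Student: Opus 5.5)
We compute $\Phi^{-1}\circ\Phi$ directly from the two block-map formulas; the only thing to check is that every word we feed into $\Phi^{-1}$ is admissible. First we make the composition precise as a map on $\mathcal{L}_{4R+1}(X)$. Given $w=w_0w_1\dots w_{4R}\in\mathcal{L}_{4R+1}(X)$, we apply $\Phi$ to each of its $2R+1$ subwords of length $2R+1$. This gives the word
$$
u=\Phi(w_0\dots w_{2R})\,\Phi(w_1\dots w_{2R+1})\cdots\Phi(w_{2R}\dots w_{4R})
$$
of length $2R+1$, and we set $(\Phi^{-1}\circ\Phi)(w):=\Phi^{-1}(u)$.

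Before this is meaningful we must check that $u\in\mathcal{L}_{2R+1}(X)$, since $\Phi^{-1}$ is only defined on admissible words. Because $w$ is admissible, there is some $x\in X$ with $x(j-2R)\dots x(j+2R)=w$ for a fixed position $j$. By the Curtis-Hedlund-Lyndon formula for $\varphi$, the word $u$ is exactly the block $(\varphi(x))(j-R)\dots(\varphi(x))(j+R)$ of the point $\varphi(x)\in X$. Hence $u$ is admissible.

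Next we verify that the composed map implements the identity. Take any $x\in X$ and $i\in\Z$, and let $w=x(i-2R)\dots x(i+2R)$. By the previous step, the word $u$ built from $w$ equals $(\varphi(x))(i-R)\dots(\varphi(x))(i+R)$. Since $\Phi^{-1}$ implements $\varphi^{-1}$ with range $R$, applying it to $u$ gives $(\varphi^{-1}(\varphi(x)))(i)=x(i)$. So $(\Phi^{-1}\circ\Phi)(x(i-2R)\dots x(i+2R))=x(i)$ for all $x$ and $i$, which says precisely that $\Phi^{-1}\circ\Phi$ implements $\Id$ as a range $2R$ block code.

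There is no real obstacle here. The one point needing care is the admissibility of the intermediate word $u$, and the realization by a point of $X$ handles it.
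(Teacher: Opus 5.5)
Your proof is correct and follows the same route as the paper. In both, the key point is that $\Phi(w)$ is admissible because $w$ is realized inside some $x\in X$, so $\Phi(w)$ appears in $\varphi(x)$; then $\varphi^{-1}\circ\varphi=\Id$ on $X$ returns the center letter. The paper phrases this for words of arbitrary length $2R+k$ and also records $\varphi(X_{2R+k})\subseteq X_k$ for later use on the SFT cover, whereas you specialize to length $4R+1$, which is all the statement requires.
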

\begin{proof}
   Assume that $\{X_n\}_{n=1}^\infty$ is the SFT cover of $X$. 
   For any $n\geq2R+1$, the map $\Phi$ (analogously for the $\Phi^{-1}$)  
   defines a map whose domain is $X_n$  by applying the block map $\Phi$ as a sliding block code to the elements of $X_n$.  By an  abuse of notation, we use $\varphi$ to denote the resulting automorphism of $X_n$.  A priori, this extends the domain of $\varphi\colon X\to X$ to a new map $\varphi\colon X_n\to\mathcal{A}^*$ for any $n\in\N$, with the property that $\varphi$ maps elements of $X$ to elements of $X$.  
   However, since  $\varphi(X)=X$, we claim that if $k\in\N$ is  fixed and $w\in\mathcal{L}_{2R+k}(X)$ is a fixed word, then the word $\Phi(w)\in\mathcal{A}^k$ given by 
$$ 
(\Phi(w))(j)=\Phi(w_j,w_{j+1},\dots,w_{j+R},\dots,w_{j+2R-1},w_{j+2R})
$$ 
 for each $0\leq j<k$ is  an element of $\mathcal{L}_k(X)$.  
 Namely, for each $w\in\mathcal{L}_{2R+k}(X)$, 
 there exists some $x\in[w]\cap X$ and the word $\Phi(w)$ occurs as a subword of $\varphi(x)\in X$.  In particular, $\varphi(X_{2R+k})\subseteq X_k$.  For $k\geq2R+1$, this allows us to apply $\Phi^{-1}$ to $\Phi(w)$ and obtain an element of $\mathcal{L}_{k-2R}(X)$.  Since $\Phi$ and $\Phi^{-1}$ are block codes that implement inverse maps from $X$ to $X$, it follows that $(\Phi^{-1}\circ\Phi)(w)_j=w_j$ for all $4R<j<|w|-4R$.  In other words,  $\Phi^{-1}\circ\Phi$ implements the identity map as a range $2R$ block code: if $k\geq2R+1$ and we apply $\Phi^{-1}\circ\Phi$  to a word in $\mathcal{L}_{2R+k}(X)$, the map removes the rightmost and leftmost $2R$ letters from the word and leaves the middle of the unchanged.
\end{proof}

\subsection{Measures and entropy}
If $(X, \sigma)$ is a subshift, we let $h_\tp(X)$ denote the topological entropy of $X$ and if $\mu$ is a $\sigma$-invariant measure on $X$, we let $h_\mu$ denote the entropy of the measure $\mu$. 

The measure $\mu$ is a {\em characteristic measure} for the subshift $(X, \sigma)$ if $\mu$ is invariant under the automorphism group $\aut(X, \sigma)$.

\section{Entropy stability}
\label{sec:entropy}

\subsection{Quantifying a stability condition}
We start by giving a condition on the language of a subshift that suffices to ensure that high entropy measures are close for words up to a fixed length (this property is often called effective intrinsic ergodicity in the literature).

\begin{theorem}
\label{th:main}
    Let $(X,\sigma)$ be a mixing shift of finite type with measure of maximal entropy $\mu$. Let $F$ be the maximal length of a minimal forbidden word  and let $s$ be the number of admissible words of length $F$. Let $\varepsilon>0$ and $\ell\in \N$. Suppose $\mu'$ is an invariant probability measure on $(X,\sigma)$
    satisfying 
    \begin{equation*}
    h_{\mu'}>h_{\tp}-\frac{\varepsilon^2}{(\frac{4}{3})^{2\ell} (\xi(s))^2},
    \end{equation*}
    where 
    \begin{equation}
    \label{eq:xi}
    \xi(s)=\frac{30s^{3(s^2+1)}}{1-\left(1-\frac{1}{4s^{2s^2}}\right)^{\frac{1}{s^2}}}.
    \end{equation}
    Then for all admissible words $w$ of length at most $\ell$, we have $|\mu'([w])-\mu([w])|<\varepsilon.$
\end{theorem}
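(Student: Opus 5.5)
We will recode $X$ as a nearest-neighbour SFT, identify $\mu$ with the Parry measure, and bound the distance from $\mu'$ to $\mu$ by the relative entropy $h_{\tp}-h_{\mu'}$ through a Pinsker-type inequality on cylinders; the factor $\xi(s)$ will come from a spectral-gap estimate for the Parry chain.

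\textbf{Step 1: recoding.} Pass to the higher block presentation on blocks of length $F-1$ (or $F$). In this presentation $X$ becomes a vertex shift $Y$ whose state set has at most $s$ elements. Its $0$--$1$ transition matrix $A$ is primitive because $X$ is mixing. Moreover $A^{s^2}>0$ entrywise, since the primitivity exponent is at most $(s-1)^2+1\le s^2$ by Wielandt's bound. The conjugacy preserves entropy. A cylinder $[w]$ with $|w|\le \ell$ in $X$ becomes a cylinder of length at most $\ell$ in $Y$, up to a partition into boundedly many cylinders. So it suffices to prove the statement for $Y$, where $\mu$ is the Parry Markov chain with transition probabilities $P_{ij}=A_{ij}v_j/(\lambda v_i)$.

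\textbf{Step 2: entropy deficit as relative entropy.} For any invariant $\mu'$ on $Y$, let $\mu'_2$ be its two-block marginal and $\nu'$ the Markov measure it induces. The standard computation gives
\[
h_{\tp}-h_{\mu'}\ \ge\ h_{\tp}-h_{\nu'}\ =\ \sum_i \mu'([i])\,D\bigl(P'(i,\cdot)\,\|\,P(i,\cdot)\bigr),
\]
since $\log\lambda=\sum_{ij}\mu'[ij]\,(-\log P_{ij})$ because the eigenvector terms telescope. Pinsker's inequality then gives
\[
\sum_i \mu'([i])\,\|P'(i,\cdot)-P(i,\cdot)\|_1\le \sqrt{2\delta},
\]
where $\delta=h_{\tp}-h_{\mu'}$.

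\textbf{Step 3: one-letter marginals.} Let $\pi'$ be the marginal of $\mu'$ on single letters. Stationarity gives $\pi'=\pi'P'$, and therefore
\[
\pi'-\pi'P=\pi'(P'-P)
\]
has $\ell^1$ norm at most $\sqrt{2\delta}$. Write $\pi$ for the marginal of $\mu$. Since $P^{s^2}$ has all entries at least some $c>0$, $P$ is a contraction in the Dobrushin sense over $s^2$ steps with coefficient $1-c$. A crude lower bound on $c$ in terms of $s$ comes from bounding the ratios of the Perron eigenvector entries and $\lambda\le s$. This yields
\[
\|\pi'-\pi\|_1\le \frac{s^2\sqrt{2\delta}}{1-(1-c)^{1/s^2}}\cdot(\text{poly in }s).
\]
This is where the denominator $1-\bigl(1-\tfrac{1}{4s^{2s^2}}\bigr)^{1/s^2}$ in $\xi(s)$ should come from: the per-step contraction rate is $(1-c)^{1/s^2}$ with $c\approx 1/(4s^{2s^2})$.

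\textbf{Step 4: longer words.} Compare $\mu'[w_0\cdots w_{k}]=\mu'[w_0\cdots w_{k-1}]\,P'_{\cdot}$ with the corresponding quantity for $\mu$, and induct on the word length. At each step the error gets multiplied by a constant and gains an additional $\sqrt\delta$ term. To keep this a geometric factor $4/3$ per letter, apply Pinsker to the conditional law of the next letter given the past; this is again controlled by $\delta$, because
\[
\delta=\sum_w \mu'[w]\,D(\cdot\|\cdot)
\]
summed over pasts. The result is an error of at most $(4/3)^{\ell}\xi(s)\sqrt{\delta}<\varepsilon$, which is exactly the stated hypothesis.

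The main obstacle is Step 3: obtaining an explicit lower bound on the entries of $P^{s^2}$ and converting it into the stated $\xi(s)$ with the right constants. I would bound $v_j/v_i\ge s^{-s^2}$ using $A^{s^2}>0$, then track the constants carefully.
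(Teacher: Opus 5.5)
Your argument is correct, and it takes a genuinely different route from the paper's.

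\textbf{The paper's route.} The paper recodes on $F$-blocks and passes to the one-sided shift. It writes $\mu'([w])-\mu([w])$ as a telescoping series $\sum_{n\ge 0}\bigl(\tilde\mu'(g_n)-\tilde\mu'(g_{n+1})\bigr)$ with $g_n=\CL_0^n\bigl(\one_{[\tilde w]}-\tilde\mu([\tilde w])\bigr)$, where $\CL_0$ is the normalized Ruelle operator. Each term is bounded by Kadyrov's lemma by $\sqrt2\|g_n\|_\infty\sqrt\delta$. The series is then summed with Bowen's spectral-gap estimate $\|g_n\|_\infty\le \frac{A\|h\|_\infty}{\inf h}\beta^{n-\ell}$. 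Most of the work is bounding Bowen's constants $A$, $\beta$, $\|h\|_\infty/\inf h$ in terms of $s$ via Wielandt. The factor $(4/3)^\ell$ comes from $\beta^{-\ell}$ with $\beta\ge 3/4$; it is paid because the decay for a length-$\ell$ cylinder only starts after $\ell$ iterations.

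\textbf{How your route relates.} Kadyrov's lemma is essentially a Pinsker inequality for the conditional law of one coordinate given the others. Your Steps 2 and 4 are a finite-past, Markov-chain version of it, and your Dobrushin contraction replaces the transfer-operator spectral gap. This makes your version self-contained.

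\textbf{What your route buys, and two corrections to make.} Your approach actually gives a better dependence on $\ell$. Run Step 4 in $\ell^1$ over all words of a given length. Take $\mu'[w_0\cdots w_k]/\mu'[w_0\cdots w_{k-1}]$ to be the conditional probability given the whole past, not $P'_{w_{k-1}w_k}$ as the first sentence of your Step 4 says. Then the per-letter multiplicative factor is exactly $1$, because $P$ is stochastic, and you get
\[
\|\mu'_\ell-\mu_\ell\|_1\le\|\pi'-\pi\|_1+(\ell-1)\sqrt{2\delta}.
\]
So no geometric $4/3$ factor needs to be engineered. What you need here is, for each fixed $k$,
\[
\sum_{|w|=k}\mu'[w]\,D\bigl(\mu'(\cdot\mid w)\,\|\,P(w_{k-1},\cdot)\bigr)=h_{\tp}-H_{\mu'}(X_k\mid X_0\cdots X_{k-1})\le\delta .
\]
This is an inequality, not the equality you wrote, but the inequality is all that is used.

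\textbf{Constants and the case $\ell<F$.} Your eigenvector bound gives $c\ge s^{-2s^2}$. So the one-letter term is at most a constant times $\sqrt\delta\big/\bigl(1-(1-\tfrac{1}{4s^{2s^2}})^{1/s^2}\bigr)$. Both terms are then comfortably below $(4/3)^\ell\xi(s)\sqrt\delta$. Since you control total variation, the case $\ell<F$ (a union of cylinders after recoding) needs no separate argument.

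\textbf{What the paper's route buys.} It uses standard thermodynamic-formalism machinery that extends beyond the Markov setting, e.g.\ to equilibrium states of H\"older potentials.
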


We note that the function $\xi(s)$ is positive and increasing in $s.$

Our proof makes use of a method of Kadyrov~\cite{kadyrov} introduced to give an quantitative version on the existence  of a unique measure of maximal entropy for a nearest-neighbor subshift of finite type.  We start by introducing some notation and results from~\cite{bowen} and~\cite{Parry-Pollicott} on the spectral properties  of the Ruelle transfer operator.  Given a mixing shift of finite type $(X, \sigma)$, 
define $(X^+, \sigma)$ to be the one-sided subshift version of $X$.  Letting $C(X^+)$ denote the continuous real valued functions on $X^+$, define the {\em Ruelle transfer operator} 
$\CL\colon C(X^+)\to C(X^+)$  by $$\CL f(x)=\sum_{\sigma y=x}f(y).$$
There is a simple maximal positive eigenvalue $\lambda$ of $\CL$, with corresponding strictly positive eigenfunction $h.$ (Here we follow the convention of Bowen that $h$ denotes this eigenfunction and should not be confused with the notation for entropy throughout  this work.) The eigenvalue $\lambda$ also satisfies
$h_\tp(X^+)=\log\lambda.$
There is a unique probability measure $\nu$ satisfying $\CL^*\nu=\lambda\nu$, 
and without loss we further assume that $h$ satisfies $\nu(h)=1$.

The normalized transfer operator $\CL_0\colon C(X^+)\to C(X^+)$ is given by $$\CL_0=\frac{1}{\lambda}\Delta_h^{-1}\CL\Delta_h,$$ where $\Delta_h\colon C(X^+)\to C(X^+)$ is the operator $f\mapsto hf.$ The spectrum of $\CL_0$ is the spectrum of $\CL$ scaled by a factor of $\lambda$, and the constant functions are the eigenfunctions corresponding to the maximal eigenvalue. The corresponding eigenmeasure is the measure of maximal entropy $\mu$ of $(X^+, \sigma)$ and is given by $d\mu=h\, d\nu.$

In the next proposition, we  assume that $X^+$ is a nearest-neighbor shift of finite type.  We  later recode general shifts of finite type as nearest-neighbor shifts of finite type in the proof of Theorem~\ref{th:main}.

\begin{proposition}\label{prop:bounds}
  Let $(X^+,\sigma)$ be a mixing nearest-neighbor shift of finite type with measure of maximal entropy $\mu$. Let $w$ be an admissible word of length $\ell\geq 1$ and let  $f=\one_{[w]}$ and $g=f-\mu(f).$ There exist $A>0$ and $\beta\in (0,1)$ such that for all $n\geq 0$, 
  \begin{equation}
  \label{eq:bound-L}
  \|\CL_0^ng\|_\infty\leq \frac{A\|h\|_\infty}{\inf h}\beta^{n-\ell}, 
  \end{equation}
 where $\CL_0$ is the normalized transfer operator and $h$ is the   eigenfunction associated to  $\CL$. 
 Furthermore, letting $s$ denote the size of the alphabet, we have $\frac{\|h\|_\infty}{\inf h}\leq s^{s}$, and we can choose $A\leq 15s^{2s^2}$ and $$\frac{3}{4}\leq \beta\leq \left(1-\frac{1}{4s^{2s^2}}\right)^{\frac{1}{s^2}}.$$
\end{proposition}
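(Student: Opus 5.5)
We argue through the normalized operator's action on Hölder (or locally constant) functions, with a coupling/Doeblin argument for the contraction rate $\beta$. Since $X^+$ is a mixing nearest-neighbor SFT on an alphabet of size $s$, its transition matrix $M$ is an $s\times s$ primitive $0$--$1$ matrix. By Wielandt's bound, $M^N>0$ for some $N\leq s^2$ (indeed $N\le (s-1)^2+1$).

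The eigenfunction $h$ depends only on the first coordinate. Concretely, $h(x)=r_{x(0)}$, where $r$ is the right Perron eigenvector of $M$, and $\CL_0 f(x)=\frac{1}{\lambda r_{x(0)}}\sum_{a: M_{a x(0)}=1} r_a f(ax)$. The ratio bound $\max r/\min r$ follows from $M^N r=\lambda^N r$ together with positivity of $M^N$. For any $i,j$,
\[
r_i=\lambda^{-N}\sum_k (M^N)_{ik}r_k\;\geq\;\lambda^{-N}r_j \;\geq\; s^{-N}r_j,
\]
using $\lambda\le s$. This gives a bound of the form $s^{s^2}$. To reach the stated $s^{s}$, I would instead use paths of length at most $s$: since the graph is irreducible, for each $i,j$ there is $k\le s$ with $(M^k)_{ij}\ge1$. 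Then $\lambda^k r_i\ge r_j$, so $r_j/r_i\le \lambda^{k}\le s^s$.

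For the contraction, $\CL_0$ is a Markov operator: it is positive, fixes constants, and satisfies $\mu(\CL_0 g)=\mu(g)$. The function $g=\one_{[w]}-\mu([w])$ depends only on the first $\ell$ coordinates. Each application of $\CL_0$ preserves that $\CL_0^n g$ depends on at most $\max(\ell-n,1)$ coordinates, so after $\ell-1$ steps it depends only on $x(0)$. Along the way, $\|\CL_0^n g\|_\infty\le 1$ for $n<\ell$. From then on, $\CL_0$ acts on functions of one coordinate as the stochastic matrix
\[
P_{ba}=\frac{M_{ab}r_a}{\lambda r_b}.
\]
Its $N$-th power has all entries at least
\[
\delta=\frac{\min r}{\lambda^N\max r}\geq s^{-N}s^{-s},
\]
which is a crude but explicit bound. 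Doeblin's argument then shows that the oscillation of $P^{N}u$ is at most $(1-s\delta)\operatorname{osc}(u)$, up to the constant in $\delta$. Because $\mu(\CL_0^n g)=0$, we have $\|\cdot\|_\infty\le\operatorname{osc}(\cdot)$. Interpolating between multiples of $N$ gives $\|\CL_0^n g\|_\infty\le A\beta^{n-\ell}$ with $\beta=(1-c)^{1/N}$ and $A\le (1-c)^{-1}\cdot 2$. The factor $\|h\|_\infty/\inf h$ in \eqref{eq:bound-L} appears naturally if one transfers the estimate from the unnormalized $\lambda^{-n}\CL^n$ via $\CL_0^n=\lambda^{-n}\Delta_h^{-1}\CL^n\Delta_h$.

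Finally, the constants need adjusting to the stated forms $A\le 15 s^{2s^2}$ and $\beta\le (1-\frac{1}{4s^{2s^2}})^{1/s^2}$. Here $N\le s^2$, and one can take $c=\frac1{4s^{2s^2}}$ since $\delta\ge s^{-s^2-s}\ge \frac{1}{4s^{2s^2}}$. The lower bound $\beta\ge 3/4$ can simply be imposed: if the estimate holds for some $\beta$, it holds for $\max(\beta,3/4)$. The main obstacle is this constant bookkeeping, namely matching the Doeblin minorization constant and the mixing time $N\le s^2$ to the exact expressions. If Wielandt's bound yields a slightly worse exponent, I would enlarge $A$ to absorb the discrepancy.
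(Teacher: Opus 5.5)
Your argument is correct, but it takes a genuinely different route from the paper.

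\textbf{The paper's route.} The paper never analyzes $\CL_0$ directly. It quotes Bowen's Lemma 1.12, the general spectral-gap estimate $\|\lambda^{-n}\CL^n(hg)\|_\infty\le A\,\nu(|hg|)\beta^{n-\ell}$. It then passes to $\CL_0=\lambda^{-1}\Delta_h^{-1}\CL\Delta_h$, and this conversion is where the factor $\|h\|_\infty/\inf h$ comes from. Next it reads off Bowen's explicit constants: $A=(\|h\|_\infty+K)\sup_{0<r\le M}\lambda^{-r}\|\CL^r\|/(1-\eta)$, $\beta=(1-\eta)^{1/M}$ and $\eta=(4\lambda^M\|h\|_\infty)^{-1}$. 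These are bounded using Wielandt's bound on $M$, $\lambda\le s$, $\|h\|_\infty\le s^{s+1}$ (via the eigenvector $v_i=\nu([i])$) and $\|\CL^r\|\le s^r$. The range $n<\ell$ is handled by a separate direct computation.

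\textbf{Your route.} You use the nearest-neighbor structure to collapse everything after $\ell-1$ steps to the finite stochastic matrix $P=\lambda^{-1}D^{-1}M^{T}D$, with $D=\mathrm{diag}(r)$. You then run a Doeblin/Dobrushin contraction, with the minorization supplied by $M^N>0$.

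\textbf{What each buys.} Your argument is self-contained and avoids extracting constants from inside Bowen's proof. It handles $n<\ell$ trivially, via $\|\CL_0^ng\|_\infty\le\|g\|_\infty\le1$. It also gives sharper constants: with $c=\frac{1}{4s^{2s^2}}\le\frac14$ you get $A\le 8/3$, uniformly in $s$. The factor $\|h\|_\infty/\inf h\ge1$ in the stated bound is then pure slack, so you do not need to ``transfer'' anything from the unnormalized operator. The paper's route is shorter given the citation and sits inside the general Ruelle--Perron--Frobenius framework for H\"older observables. The one ingredient you share with it is the path-of-length-less-than-$s$ argument giving $\|h\|_\infty/\inf h\le\lambda^t\le s^s$.

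\textbf{Two small points to tidy.}
\begin{enumerate}
\item With your convention ($M_{ab}=1$ iff $ab$ is allowed, and $\CL_0$ summing over $a$ with $M_{a x(0)}=1$), $h$ is given by the \emph{left} Perron eigenvector, $r^{T}M=\lambda r^{T}$. With a right eigenvector the rows of $P$ would not sum to $1$. The ratio bound is unaffected, since irreducibility gives paths in both directions.
\item Replacing $\beta$ by $\max(\beta,3/4)$ \emph{weakens} $\beta^{n-\ell}$ when $n<\ell$. This is harmless, because there your bound $\|\CL_0^ng\|_\infty\le1\le A\beta^{n-\ell}$ holds for every $\beta\in(0,1)$. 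Moreover $(1-\frac{1}{4s^{2s^2}})^{1/s^2}\ge 3/4$, so the required range for $\beta$ is nonempty.
\end{enumerate}
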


\begin{proof}
Let $\CL\colon C(X^+)\to C(X^+)$ denote the Ruelle transfer operator, $\lambda$ its (simple and positive) maximal eigenvalue with eigenfunction $h$, and $\nu$ be the unique probability measure such that $\CL^*\nu = \lambda\nu$.
By~\cite[Lemma 1.12]{bowen}, there exist  $A>0$ and $\beta\in (0,1)$ such that for all $n\geq \ell$,
$$\|\tfrac{1}{\lambda^n}\CL^n(hg)\|_\infty\leq A\nu(|hg|)\beta^{n-\ell}.$$
Since $\CL_0=\frac{1}{\lambda}\Delta_h^{-1}\CL\Delta_h$, it follows that  \begin{equation}
\label{eq:sup-bound}
    \|\CL_0^ng\|_\infty\leq \frac{1}{\inf h}\|\tfrac{1}{\lambda^n}\CL^n(hg)\|_\infty\leq \frac{A}{\inf h}\nu(|hg|)\beta^{n-\ell}\leq \frac{A\|h\|_\infty}{\inf h}\beta^{n-\ell}.
\end{equation}

To prove the bound on $h$, 
let $Q$ denote the transition matrix of $X^+$ and let $M$  be its primitivity exponent. A (sharp) bound for $M$, due to Wielandt~\cite{wielandt}, is
\begin{equation}
    \label{eq:bound-M}
M\leq (s-1)^2+1.
\end{equation}
Let $u$ be a left Perron-Frobenius eigenvector for $Q$ and define $\psi\colon X^+\to \R$ by 
\begin{equation}
\label{def:psi}    
\psi(x)=u_{x(0)}.
\end{equation}
Then $$(\CL \psi)(x)=\sum_{i}Q_{ix(0)}\psi(ix(0)x(1)\dots)=\sum_{i}u_iQ_{ix(0)}=\lambda u_{x(0)}=\lambda \psi(x),$$
so $\psi$ is an eigenfunction for $\lambda.$ 
Since $\lambda$ is a simple eigenvalue of $\CL$, 
the eigenfunction $h$ is unique up to scaling. The assumption that $\nu(h)=1$ therefore implies that  $h=\tfrac{1}{\nu(\psi)}\psi.$ In particular, 
by the definition~\eqref{def:psi} of $\psi$, the eigenfunction $h$ depends only on the  first entry. 

For $n<\ell$, direct computation shows that $\CL_0^ng(x)=\frac{h(x(0))}{\lambda^n h(x(n))}-\mu(f).$ Recall that $h$ is strictly positive and takes finitely many values, $0\leq\mu(f)\leq 1$, and $\lambda>1$. If $\frac{h(x(0))}{\lambda^n h(x(n))}>\mu(f)$, then $$\left|\frac{h(x(0))}{\lambda^n h(x(n))}-\mu(f)\right|\leq \frac{h(x(0))}{\lambda^n h(x(n))}\leq \frac{h(x(0))}{ h(x(n))}\leq \frac{\|h\|_{\infty}}{\inf h}.$$ Otherwise, $$\left|\frac{h(x(0))}{\lambda^n h(x(n))}-\mu(f)\right|\leq 1\leq \frac{\|h\|_{\infty}}{\inf h}.$$ Without loss of generality, we may assume that  $A\geq 1$, and since $n<\ell$, $$\frac{\|h\|_{\infty}}{\inf h}\leq \frac{A\|h\|_{\infty}}{\inf h}\beta^{n-\ell}.$$ 
Combining this with~\eqref{eq:sup-bound}, we conclude that~\eqref{eq:bound-L} holds for all $n\geq 1$.

Letting $u_p=\min_{i}u_i,$ and $u_q=\max_iu_i$, we have $$\frac{\|h\|_\infty}{\inf h}=\frac{u_q}{u_p}.$$ 
Consider the graph described by $Q$. Since $X$ is transitive, there exists a path from $q$ to $p$ of length $t<s,$ i.e. $(Q^t)_{qp}\geq 1.$ Since $u$ is a left eigenvector, we have $$\lambda^tu_p=(uQ^t)_p=\sum_ju_j(Q^t)_{jp}\geq u_q,$$ which gives \begin{equation}\label{evectorbounds}
 \frac{u_q}{u_p}\leq \lambda^t\leq s^s.
\end{equation}
The first inequality follows.

We are left with proving that the bounds on  $A$ and $\beta$ hold. In~\cite{bowen}, it is shown that $$A=\frac{(\|h\|_\infty+K)\sup_{0<r\leq M}(\frac{1}{\lambda^r}\|\CL^r\|)}{1-\eta}\quad \text{ and }
\quad\beta=(1-\eta)^{\frac{1}{M}},$$
 where $\eta=(4\lambda^M\|h\|_\infty)^{-1}$ and $K=\lambda^Me^2.$ A bound on $M$
 is given in~\eqref{eq:bound-M} and so we must find bounds for
 $\lambda$ and $\|h\|_\infty$ in terms of $s$. 
 
Recall that $h_{\tp}(X^+)=\log \lambda$. It follows that $\lambda$ is the Perron-Frobenius eigenvalue of $Q,$ and since $Q$ is an irreducible $s\times s$ matrix with entries in $\{0,1\},$ we have $1<\lambda\leq s$. 
Since by~\eqref{def:psi} $h$ depends only on the first entry, 
the condition $\nu(h)=1$ can be expressed as
\begin{equation}\label{hnormalisation}
    \sum_{i}\nu([i])h(i)=1.
\end{equation}
We also have that $\sum_i\nu([i])=1$, and so $\|h\|_\infty\geq 1.$  
To derive an upper bound on $h$, let $v$ denote the vector $v_i=\nu([i])$. Then, since $\nu$ is an eigenmeasure, 
$$(Qv)_i=\sum_{j}Q_{ij}v_j=\int_{X^+}Q_{ix(0)}d\nu(x)=\int_{X^+} \CL\one_{[i]}d\nu=\lambda\int_{X^+}\one_{[i]}d\nu=\lambda v_i,$$ 
and it follows that $v$ is a right Perron-Frobenius eigenvector. In particular, $v$ is strictly positive. Combining this with (\ref{hnormalisation}),  we have $$\|h\|_\infty\leq \frac{1}{\min_{i}v_i}.$$
A similar argument as used to derive~\eqref{evectorbounds} gives that $\frac{\max_iv_i}{\min_iv_i}\leq \lambda^s,$ and  $\sum_iv_i=1$ implies that $\max_iv_i\geq \frac{1}{s}.$
Combining these, we have $\|h\|_\infty\leq s\lambda^s\leq s^{s+1}.$ 
It follows that $$\frac{1}{4s^{2s^2}}\leq\frac{1}{4s^{s^2+s+1}} \leq\eta\leq \frac{1}{4}.$$

To complete the proof, we bound $\sup_{0<r\leq M}(\frac{1}{\lambda^r}\|\CL^r\|).$
Let $r\geq 1$ and $\phi\in C(X^+)$. Then $$|\CL^r\phi(x)|\leq \sum_{\sigma^ry=x}|\phi(y)|\leq \|\phi\|_\infty\sum_i(Q^r)_{ix(0)}\leq \|\phi\|_\infty s^r\leq \|\phi\|_\infty s^{s^2}.$$ 
Thus we have that $\|\CL^r\|\leq s^r$ and $\sup_{0<r\leq M}(\frac{1}{\lambda^r}\|\CL^r\|)\leq s^{s^2}.$ It follows that 
\begin{equation*}
    A\leq \frac{4s^{2s^2}(1+e^2)}{3}\leq 15s^{2s^2}\quad \text{ and }\quad\frac{3}{4}\leq \beta\leq \left(1-\frac{1}{4s^{2s^2}}\right)^{\frac{1}{s^2}}, 
\end{equation*}
giving the last two inequalities in the proof. 
\end{proof}

 We use this to complete the proof of Theorem~\ref{th:main}. 
\begin{proof}[Proof of Theorem~\ref{th:main}]
Assume that $(X, \sigma)$ is a mixing shift of finite type with measure of maximal entropy $\mu$, and fix $\varepsilon>0$ and an admissible word $w$ of length $\ell$.  Let $(X^+,\sigma)$ be the one-sided subshift version of $(X,\sigma)$.  We note that $\mu$ defines a measure of maximal entropy on $(X^+,\sigma)$ by declaring all cylinder sets in $(X^+,\sigma)$ to have the same measure that $\mu$ gives to them in $(X,\sigma)$ and note that entropy is preserved via this procedure.  Therefore, we prove this theorem in the setting of $(X^+,\sigma)$ as the conclusion only depends on measures of cylinder sets.  In a slight abuse of notation, we continue to use the notation $(X,\sigma)$ for the system rather than $(X^+,\sigma)$.

We begin by recoding $X$ on words of length $F$. In other words, we create a directed graph whose vertices are the words in $\mathcal{L}_F(X)$ and that has a directed edge from $(a_0,\dots,a_{F-1})$ to $(b_0,\dots,b_{F-1})$ if and only if $a_{i+1}=b_i$ for all $0\leq i<F-1$. Then $\tilde{X}$ is the shift of finite type presented by this directed graph.  We refer to this new shift as $\tilde{X}$ and note that $X$ is topologically conjugate to $\tilde{X}$. Let $\psi\colon X\to \tilde X$ be a conjugacy map and $\tilde\mu$ be the measure on $\tilde X$ corresponding to $\mu$. 

If $\ell \geq F,$ then $\psi[w]\subset \tilde X$ is a cylinder of length $\ell-F+1$ in $\tilde X$, and we denote this cylinder by $[\tilde w].$ Set $f=\one_{[\tilde w]}$, $g=f-\tilde \mu(f),$ and $g_n=\CL_0^ng.$ 
By a result of Kadyrov~\cite[Lemma 3.2]{kadyrov}, we have that $$\left|\tilde\mu'(g_{n+1})-\tilde\mu'( g_{n})\right|\leq \sqrt{2}\|g_n\|_\infty(h_{\tilde\mu}-h_{\tilde\mu'})^{1/2}$$
for any invariant probability measure  $\tilde\mu'$. By Proposition~\ref{prop:bounds}, $$\|g_n\|_\infty\leq \frac{A\|h\|_\infty}{\inf h}\beta^{n-\ell+F-1},$$ and in particular, $$\lim_{n\to \infty} g_n=0=\tilde\mu(g).$$ Let $\mu'$ be any invariant probability measure on $X$ and $\tilde \mu'$ be its pushforward under $\psi$.  Since $\psi$ is a topological conjugacy, $h_\mu=h_{\tilde\mu}$ and $h_{\mu'}=h_{\tilde\mu'}.$ Then, we have
\begin{align*}
    \left|\mu'([w])-\mu([w])\right|&=\left|\tilde \mu'(g)-\tilde\mu(g)\right|\\
    &=\lim_{n\to \infty} \left|\tilde \mu'(g)-\tilde \mu'(g_n)\right| \leq \sum_{n=0}^\infty\left|\tilde \mu'(g_{n+1})-\tilde \mu'( g_{n})\right|\\
    &\leq \frac{\sqrt{2}A\beta^{F-1}\|h\|_\infty}{\beta^\ell(1-\beta)\inf h}(h_\mu-h_{\mu'})^{1/2}\leq \frac{\sqrt{2}A\|h\|_\infty}{\beta^\ell(1-\beta)\inf h}(h_\mu-h_{\mu'})^{1/2},
\end{align*}
which is smaller than $\varepsilon$ whenever 
\begin{equation}
    \label{eq:h-drop}
h_\mu-h_{\mu'}<\frac{\varepsilon^2}{\left(\frac{\sqrt{2}A\|h\|_\infty}{\beta^\ell(1-\beta)\inf h}\right)^2}.
\end{equation}
The three bounds given in Proposition~\ref{prop:bounds} imply that 
$$\left(\frac{4}{3}\right)^{\ell} \xi(s)\geq \frac{\sqrt{2}A\|h\|_\infty}{\beta^\ell(1-\beta)\inf h},$$
where $s$ is the alphabet size for $\tilde X$, which is equal to the number of admissible words of length $F$ in $X.$  One can check that, in fact, $|\mu^{\prime}([w])-\mu([w])|<\frac{\varepsilon}{s}$ when $h_{\mu}-h_{\mu^{\prime}}$ satisfies~\eqref{eq:h-drop}.

If $\ell<F$, the argument is similar. In this case, $\psi[w]=\bigsqcup_{i=1}^r[\tilde \alpha_i],$ where each $\tilde \alpha_i$ is a symbol in the alphabet for $\tilde X$ and $r<s$. 
 We apply this argument, replacing $\tilde w$ by  $\tilde\alpha_i$ (and using the bound $\varepsilon/s$ instead of $\varepsilon$), to see that whenever 
$$ h_{\mu}-h_{\mu'}<\frac{\varepsilon^2}{(\frac{4}{3})^{2\ell} (\xi(s))^2},$$
we have $|\tilde\mu'[\tilde\alpha_i]-\tilde\mu[\tilde\alpha_i]|<\frac{\varepsilon}{s}.$ The triangle inequality then gives 
\begin{equation*}
    |\mu'[w]-\mu[w]|\leq \sum_{i=1}^r|\tilde\mu'[\tilde\alpha_i]-\tilde\mu[\tilde\alpha_i]|<\varepsilon.
 \qedhere
 \end{equation*}
\end{proof}

\subsection{A characteristic measure for entropy stable shifts}  
We show that a sufficiently small entropy drop in the SFT cover a shift suffices for the existence of a characteristic measure. 
\begin{theorem}
\label{th:control-word-stats}
    Assume that $\{X_n\}_{n\in\N}$  is the SFT cover for a topologically mixing system $(X, \sigma)$.  Assume that for each $\varepsilon  > 0$ and integers $\ell,j\geq 1$, there is an integer $m = m(\varepsilon, \ell, j)$ 
    such that  
    \begin{equation}
       \label{eq:entropy-bound-2}
\big|    h_\tp(X_m) - h_\tp (X_{m+j})
\big| < \Xi(\varepsilon, \ell, j)
        \end{equation}
    for the function $\Xi(\varepsilon, \ell, j) = \frac{\varepsilon^2}{(\frac{4}{3})^{2\ell} (\xi(s_m))^2}
    $, where $s_m\leq|\CA(X)|^m$ is the number of admissible words of length $m$ in $X_m$ and $\xi$ is defined in~\eqref{eq:xi}.  Then the system $(X, \sigma)$ supports a characteristic measure. 
\end{theorem}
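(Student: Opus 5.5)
Fix $\varepsilon>0$ and $\ell$. For each $j$ we get an index $m=m(\varepsilon,\ell,j)$ with $h_\tp(X_m)-h_\tp(X_{m+j})<\Xi(\varepsilon,\ell,j)$. Let $\mu_m$ be the unique measure of maximal entropy of the mixing SFT $X_m$ (mixing because $X$ is), and let $\mu_{m+j}$ be that of $X_{m+j}$. Since $X_{m+j}\subseteq X_m$, we may regard $\mu_{m+j}$ as an invariant measure on $X_m$, and $h_{\mu_{m+j}}=h_\tp(X_{m+j})$.

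\emph{Step 1: comparing the two maximal measures.} Apply Theorem~\ref{th:main} to the SFT $X_m$, with $\mu=\mu_m$ and $\mu'=\mu_{m+j}$. Its minimal forbidden words have length at most $m$, so the relevant count $s$ is at most $s_m$ (after passing to length-$m$ words, if necessary, which is harmless since $\xi$ is increasing). The entropy hypothesis~\eqref{eq:entropy-bound-2} is exactly what Theorem~\ref{th:main} needs. We conclude $|\mu_m([w])-\mu_{m+j}([w])|<\varepsilon$ for all words $w$ of length at most $\ell$.

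\emph{Step 2: approximate invariance under automorphisms.} Let $\varphi\in\Aut_R(X)$. By the lemma in Section~2 (the block-map extension argument), the block map $\Phi$ defines a map with $\varphi(X_{2R+k})\subseteq X_k$. For $j\ge 2R$, $\varphi$ therefore maps $X_{m+j}$ into $X_m$ and $\varphi^{-1}$ maps $X_{m+j}$ into $X_m$ as well. The pushforward $\varphi_*\mu_{m+j}$ is then an invariant measure on $X_m$. Since $\varphi$ is injective on $X_{m+j}$ (the composition $\Phi^{-1}\circ\Phi$ is the identity on $X_{m+j}$ when $m\ge 4R+1$), we get $h(\varphi_*\mu_{m+j})=h_\tp(X_{m+j})$. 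Applying Theorem~\ref{th:main} again gives
\[
|\varphi_*\mu_{m+j}([w])-\mu_m([w])|<\varepsilon,
\qquad\text{hence}\qquad
|\varphi_*\mu_{m+j}([w])-\mu_{m+j}([w])|<2\varepsilon
\]
for $|w|\le\ell$.

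\emph{Step 3: taking a limit.} Choose $\varepsilon_k\to0$, $\ell_k\to\infty$, $j_k\to\infty$, and set $\nu_k=\mu_{m_k+j_k}$ with $m_k=m(\varepsilon_k,\ell_k,j_k)$. We need $m_k\to\infty$. If $m_k$ stayed bounded, then along a subsequence $h_\tp(X_m)=\lim_j h_\tp(X_{m+j})$, and one still works at a fixed $m$ with $j\to\infty$. This case needs care, so I would first try to arrange $m\ge k$ by noting that the hypothesis can be applied with larger $j$; otherwise I would just use $m+j_k\to\infty$, which is what matters. Take a weak$^*$ limit point $\nu$ of $(\nu_k)$. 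Since $\nu_k$ is supported on $X_{m_k+j_k}$ and $m_k+j_k\to\infty$, $\nu$ is supported on $\bigcap_n X_n=X$. Fix $\varphi\in\Aut_R(X)$. Its block-code extension acts continuously on a neighborhood of $X$ in $X_{4R+1}$, so $\varphi_*\nu_k\to\varphi_*\nu$. Step 2 gives $|\varphi_*\nu([w])-\nu([w])|\le 2\varepsilon_k\to0$ for every $w$, so $\varphi_*\nu=\nu$. Since $\Aut(X)=\bigcup_R\Aut_R(X)$, $\nu$ is characteristic.

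\emph{Main obstacle.} The delicate point is Step 2: making precise that the block-code extension of $\varphi$ is an injective map $X_{m+j}\to X_m$ that preserves the entropy of $\mu_{m+j}$. Injectivity, and hence entropy preservation, would follow from the identity $\Phi^{-1}\circ\Phi=\Id$ on words, in the lemma of Section~2. A second point to check is that the alphabet-size parameter used in Theorem~\ref{th:main} is bounded by $s_m$.
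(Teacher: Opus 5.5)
Your proposal is correct and is essentially the paper's argument: compare both $\mu_{m+j}$ and $\varphi_*\mu_{m+j}$ (which lives on $X_{m+j-2R}\subseteq X_m$ and has the same entropy) to $\mu_m$ via Theorem~\ref{th:main}, then take a weak$^*$ limit; the paper does this with the specific choice $m_n=m(1/n,n,n)$ and the measures $\mu_{m_n+n}$. Your worry about $m_k\to\infty$ is unnecessary. Injectivity of the extended $\varphi$ only needs $m+j\ge 4R+1$ (not $m\ge 4R+1$), and support in $X$ only needs $m_k+j_k\to\infty$; both follow from $j_k\to\infty$.
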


\begin{definition}
\label{def:entropy-stable}
A topologically transitive subshift 
$(X, \sigma)$ with an SFT cover satisfying the conditions of Theorem~\ref{th:control-word-stats}  is said to be  {\em entropy stable}.
\end{definition}

\begin{proof}
We start by producing a measure $\mu$ on the system $(X, \sigma)$ that is a weak* limit of the measures $(\mu_{m})_{m\in\N}$, 
where $\mu_m$ denotes the measure of maximal entropy on $X_m$. 
The (positive) function $\Xi(\varepsilon, \ell, j)$ is monotonically decreasing in each of the parameters:  as $\varepsilon$ tends to zero,  as the integer $\ell$ tends to infinity, and as $j\to \infty$.  Thus if~\eqref{eq:entropy-bound-2} holds for some $m\in\N$, some $\varepsilon_0 > 0$ and some $\ell_0,j_0\in\N$, then~\eqref{eq:entropy-bound-2} also holds for the same $m$, and for $\varepsilon>0$ and $\ell,j\in\N$, whenever $\varepsilon\geq\varepsilon_0$, $\ell\leq\ell_0$, and $j\leq j_0$.  Given $\varepsilon>0$ and $\ell,j\in\N$, let $m(\varepsilon,\ell,j)$ be the value of $m$ for which~\eqref{eq:entropy-bound-2}  holds.  For notational convenience, we define $m_n:=m(1/n,n,n)$.  

    Let $\phi\in\aut(X, \sigma)$ and assume that $\phi$ has range $R$ (we always assume that the range is symmetric so that $\phi^{-1}$ also has range $R$).  Fix a word $w\in\mathcal{L}(X)$.  Choose $n\in\N$ that is larger than $\max\{|w|,2R+1\}$.  
    By the definition of $m_n$,  we have that
    $$ 
    |h_{\tp}(X_{m_n})-h_{\tp}(X_{m_n+n})|<\Xi(1/n,n,n).
    $$ 
    Since $h_{\mu_{m_n+n}}(\sigma)=h_{\tp}(X_{m_n+n})$, we have that   
\begin{equation}\label{eq:meas1}
    h_{\mu_{m_n+n}}(\sigma)>h_{\tp}(X_{m_n})-\Xi(1/n,n,n).
    \end{equation}
    We also have that $h_{\phi_*(\mu_{m_n+n})}(\sigma)=h_{\mu_{m_n+n}}(\sigma)$ since $\phi$ is a topological conjugacy between $X_{m_n+n}$ and $\phi(X_{m_n+n})$.  Moreover, since $\phi(X_{m_n+n})\subseteq X_{m_n+n-2R}\subseteq X_{m_n}$, we have that $\phi_*\mu_{m_n+n}$ is a measure on $X_{m_n}$.
    Therefore it also follows that 
    \begin{equation}\label{eq:meas2}
    h_{\phi_*(\mu_{m_n+n})}(\sigma)>h_{\tp}(X_{m_n})-\Xi(1/n,n,n). 
    \end{equation}
    Thus it follows from Theorem~\ref{th:main} and~\eqref{eq:meas1} that 
    \[
    |\mu_{m_n+n}([w]) - \mu_{m_n}([w])| < \frac{1}{n} 
    \]
    and from Theorem~\ref{th:main} and~\eqref{eq:meas2} that 
    \[
    |\phi_*\mu_{m_n+n}([w]) - \mu_{m_n}([w])| < \frac{1}{n}. 
    \]
    Combining these, we obtain that     \begin{equation}\label{eq:meas3}
    |\phi_*\mu_{m_n+n}([w]) - \mu_{m_n+n}([w])| < \frac{2}{n}.
    \end{equation}
    Let $\mu$ be a weak* limit of the sequence $(\mu_{m_n+n})_{n\in\N}$.  Since~\eqref{eq:meas3} holds for all sufficiently large $n$, it follows that 
    \begin{equation}\label{eq:meas4}
    |\phi_*\mu([w]) - \mu([w])| = 0.
    \end{equation}
    As~\eqref{eq:meas4} holds for any word $w\in\mathcal{L}(X)$, it follows that $\phi_*\mu=\mu$.  Since this holds for any $\phi\in\Aut(X)$, it follows that $\mu$ is a characteristic measure on $X$.    
\end{proof}

\subsection{Equivalency of entropy stability and language stability}\label{sec:same}
Language stable shifts have the property that for all $j\in\N$, there exists $m\in\N$ such that $X_m=X_{m+j}$.  Thus any language stable shift is also entropy stable.  For a converse statement, we start with a calculation of the change in entropy that happens when a single word is removed from the language. Though such bounds can possibly be derived from existing results~\cite{lind, ramsey}, we provide a self-contained proof for convenience.

\begin{theorem}
\label{th:entropy-drop}
If $X$ is a nontrivial transitive nearest-neighbor shift of finite type with alphabet size $n$ and the shift of finite type $Y$ is obtained by removing a single word $w$ of length $k > 1$ from the language of $X$, then 
\[
h_\tp(X) - h_\tp(Y) > h_\tp(X) e^{-2(3n+4k)h_\tp(X)}.
\]
\end{theorem}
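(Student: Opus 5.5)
The plan is to compare the growth rates of $\CL_N(X)$ and $\CL_N(Y)$ directly. I will show that from the words of $Y$ one can manufacture exponentially more words of $X$ by splicing in copies of the forbidden word $w$. Write $\lambda=e^{h_\tp(X)}$ and $\lambda'=e^{h_\tp(Y)}$, so that $\lambda$ is the Perron--Frobenius eigenvalue of the transition matrix $Q$ of $X$ and $\lambda'\le\lambda$. The target is an inequality of the form
$$\lambda-\lambda'\ \ge\ c\,\lambda^{1-2(3n+4k)}$$
with an explicit constant $c$. From it, the concavity of $\log$ gives $h_\tp(X)-h_\tp(Y)\ge(\lambda-\lambda')/\lambda$. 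Finally, $\lambda>1$ for a nontrivial transitive SFT, which lets me absorb constants: for instance $1-e^{-h}\ge h e^{-h}$, and the leftover powers of $\lambda$ go into the exponent $2(3n+4k)h_\tp(X)$. If $Y$ is empty or has entropy $0$, the statement is immediate, so I assume $h_\tp(Y)>0$.

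\textbf{Splicing construction.} The graph of $X$ is strongly connected on $n$ vertices, so any two symbols are joined by an admissible path of length at most $n$. For each pair of symbols $a,b$ I fix connecting words so that the block $a\,p_{a}\,w\,q_{b}\,b$ is admissible in $X$. Padding the connectors with a fixed loop, using aperiodicity/mixing if necessary (otherwise working along a residue class of lengths), I arrange that all spliced blocks have one common length $M\le k+2n+O(n)$, and I aim for $M\le 3n+4k$.

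Next take words $v_1,\dots,v_r\in\CL_L(Y)$ of a fixed length $L$. Between consecutive $v_i$ and $v_{i+1}$ I either glue directly, when the transition is allowed, or insert the block $p\,w\,q$. This produces words of $X$ of length roughly $rL+jM$, where $j$ is the number of insertions. Counting gives a lower bound on $|\CL_N(X)|$ of the form $\sum_j\binom{r}{j}|\CL_L(Y)|^{r}\cdot(\text{connector factor})$. Passing to the exponential growth rate, equivalently comparing generating functions $\sum_N|\CL_N(\cdot)|z^N$ at their radii of convergence $1/\lambda$ and $1/\lambda'$, yields
$$\lambda^{-1}\ \le\ \lambda'^{-1}\bigl(1-C\lambda'^{-M}\bigr).$$
Hence $\lambda-\lambda'\gtrsim\lambda^{1-M}$ up to a factor absorbed into doubling the exponent.

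\textbf{Main obstacle: injectivity of the splicing.} Different choices of insertion positions must not produce the same $X$-word, at least up to a bounded multiplicity. Occurrences of $w$ in the output can only arise inside inserted blocks or straddling a junction, since each $v_i$ avoids $w$. My first attempt is to decode greedily from the left: the inserted copies of $w$ sit at positions in a fixed lattice determined by the previous choices. So an occurrence of $w$ at a lattice position where an insertion could begin signals an insertion unless it straddles a junction. Straddling occurrences can happen in at most $O(k)$ ways per junction, so the number of preimages of any word is at most $(2k)^{j}$, which costs only a factor that I can fold into $C$. If this bookkeeping becomes unwieldy, the fallback is to insert $w$ only at positions separated by at least $2k+M$. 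A straddling copy of $w$ then cannot be confused with an inserted one, at the price of an extra factor of $\lambda^{-O(k)}$. This is exactly the slack allowed by the $4k$ in the exponent $2(3n+4k)$.
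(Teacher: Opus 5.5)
Your passage from an eigenvalue gap to the entropy gap is fine, but the combinatorial core has genuine gaps.

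\textbf{Injectivity.} This is the main one, and you leave it open. The missing idea is control of the occurrences of $w$ in the spliced block. Nothing in your choice of $p_a,q_b$ stops the connectors from containing further copies of $w$. New copies can also straddle the junction between a $v_i$ and a connector, or between $v_i$ and $v_{i+1}$ when you glue directly, since a concatenation of two $Y$-words need not avoid $w$. So the copies of $w$ in an output word do not locate the insertions, and the preimage bound $(2k)^j$ is unsupported. Spacing insertions by $2k+M$ does not help: the problem is spurious copies of $w$, not nearby insertions.

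The paper builds a block with exactly one copy of $w$ and a long $w$-free tail:
\begin{itemize}
\item $t$ of minimal length with $at$ ending in $w$;
\item a $Y$-word $ua$ with $|u|=n+3k$;
\item $t'$ of minimal length with $t'ua$ beginning with $w$.
\end{itemize}
Minimality forces $at$ and $t'ua$ each to contain $w$ once. Gluing along that $w$ gives $atva$ with a single occurrence of $w$ and $|v|>n+2k$. Two splicings are then compared at the first position where they differ. The copy of $w$ inserted there in one word is either absent from the other, which reads a stretch of a $w$-free $Y$-word there, or the other word's inserted copy would end inside $va$ in the first word, contradicting uniqueness. This comparison never has to decode all copies of $w$, so straddling copies are harmless.

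\textbf{Counting.} The count is also not set up correctly. With $v_1,\dots,v_r$ chosen independently, direct gluing need not be admissible. So the insertion set is not a free choice, and $\binom{r}{j}|\CL_L(Y)|^r$ is not a lower bound. Putting a connector of length $c\ge 1$ at every junction costs $c\,h_\tp(X)$ per junction, which a binary choice worth at most $\log 2$ need not repay. You must splice into a single $Y$-word. The insertion sites then need density bounded below in terms of $n$ alone, or an unaccounted factor $1/L$ enters the bound.

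\textbf{Common length.} Equalizing all blocks to one length $M\le 3n+4k$ by padding with loops is not possible in general. $X$ is only transitive, so if it has period $d>1$ the lengths of the blocks $a\,p_a w q_b\,b$ fall into different residue classes mod $d$. Even when $X$ is mixing, the lengths available for all pairs are governed by the primitivity exponent, which can be $(n-1)^2+1$.

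\textbf{How the paper handles these.} One device covers all three points. Take $a$ with $\nu([a])\ge 1/n$ for an ergodic measure of maximal entropy $\nu$ of $Y$. Let $Z_m$ be the $Y$-words of length $m$ with more than $m/2n$ copies of $a$; these still have growth rate $h_\tp(Y)$, by the ergodic theorem and Ramos--Pavlov. Replace $\lfloor\varepsilon m\rfloor$ of the first $\lfloor m/2n\rfloor$ copies of $a$ by the single block $atva$, whose length is less than $3n+4k$. This gives directly
\[
\bigl|\CL_{m(1+\varepsilon(3n+4k))}(X)\bigr|\ \ge\ |Z_m|\binom{\lfloor m/2n\rfloor}{\lfloor\varepsilon m\rfloor}.
\]
The choice $\varepsilon=(2n)^{-1}e^{-2(3n+4k)h_\tp(X)}$ then yields the stated bound. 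The generating-function inequality, which you assert without derivation, is not needed.
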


\begin{proof}
Choose $X, h, n, k, w$ as in the statement. Take any letter $a$ such that $\nu([a]) \geq 1/n$ for some ergodic measure of maximal entropy $\nu$ for $Y$.
Since $a \in \CL(Y)$, there exists a word $u$ such that $ua$ has no occurrences of $w$ and $|u| = n + 3k$. 
By transitivity of $X$, there exist words $t, t'$ of minimal length such that $at$ has $w$ as its suffix and $t'ua$ has $w$ as its prefix. By assumption of minimal length, $0 < |t|, |t'| < n + k$, and neither $at$ nor $t'ua$ contain any  other occurrences of $w$. Gluing these words together on the central $w$ yields a word $atva$ which contains only one occurrence of $w$ (as the suffix of $at$). Furthermore, we have that  
$|v| = |u| + |t'| - k \in (n + 2k, 2n + 3k)$. 

For any $m$, define $Z_m$ to be the set of words in $\CL_m(Y)$ with more than $\frac{m}{2n}$ occurrences of $a$.
By the ergodic theorem, $\nu(Z_m)$ approaches $1$, and so by~\cite[Corollary 2.7]{ramos-pavlov},
\[
m^{-1} \ln |Z_m| \rightarrow h(\nu) = h_\tp(Y)
\]
as $m\to\infty$. 

Choose any $0 < \varepsilon < \frac{1}{2n}$. For any word $y \in Z_m$, define $A_Y = \{i \ : \ y(i) = a\}$. By definition, $|A_Y| > \frac{m}{2n}$, so define $B_Y$ to consist of the smallest $\lfloor \frac{m}{2n} \rfloor$ elements of $A_Y$. Then, for every set $S \subset B_Y$ with $|S| = \lfloor \varepsilon m \rfloor$, associate a word $f(y,S)$ by replacing $y(s) = a$ by $atva$ for each $s \in S$. Then $f(y,S) \in L(X)$, since all adjacencies are either part of $atva$ or $y$, both of which are in $\CL(X)$.

We claim that all such words are distinct. To see this, consider any unequal pairs $(y, S)$ and $(y', S')$. Take the minimal $i$ for which $y(i) \neq y'(i)$ or $\chi_S(i) \neq \chi_{S'}(i)$. Since $y(j) = y'(j)$ and $\chi_S(j) = \chi_{S'}(j)$ for $j < i$, the replacements made in $y$ and $y'$ to the left of the $i$th location are identical, resulting in a word $z$. Then the letters immediately after $z$ in $f(y,S)$ and $f(y',S')$ are $y(i)$ and $y'(i)$ respectively, and so if $y(i) \neq y'(i)$ then $f(y,S) \neq f(y',S')$. 

Thus we can assume $y(i) = y'(i)$ and $\chi_S(i) \neq \chi_{S'}(i)$, and without loss, we assume  that $i \in S$ and $i \notin S'$, meaning that $y(i) = y'(i) = a$. Since $i \in S$, $f(y,S)$ begins with $zatva$.  Define $j = \min\{s \in S' \ : \ s > i\}$. If 
$j > i + |t|$, then the $|t| + 1$ letters after $y'(i)$ are unchanged in the creation of $f(y',S')$, meaning that $f(y',S')$ begins with $zy'(i+1) \ldots y'(i + |t| + 1)$. Since $y'$ contained no $w$ and $at$ ends with $w$, $zat \neq zy'(i+1) \ldots y'(i + |t| + 1)$, and so again $f(y, S) \neq f(y', S')$.

The only remaining case is $i < j \leq i + |t|$. This means that $f(y',S')$ begins with $zaratva$ for some word $r$ with 
$0 \leq |r| \leq |t|$. Then the occurrence of $w$ at the end of $at$ ends with $f(y', S')(p)$ for some $p \in [|z| + 2 + |t|, |z| + 1 + 2|t|]$. If $f(y, S) = f(y', S')$, then $f(y, S)$ would have an occurrence of $w$ at the same location. However, 
\[
|z| + 1 + |t| < p \leq |z| + 2 + 2|t| < |z| + |t| + n + k < |z| + |t| + |v|,
\] 
and so $f(y,S)(p)$ lies within the $va$ following $zat$ at the beginning of $f(y, S)$. But $atva$ contains only one occurrence of $w$, ending at the final letter of $t$, a contradiction. Therefore, in all cases, $f(y, S) \neq f(y', S')$.

Every $f(y, S)$ has length $m + \lfloor \varepsilon m \rfloor (1 + |t| + |v|) < m + m\varepsilon(3n+4k)$. Putting this together yields
\[
|L_{m(1 + \varepsilon(3n+4k))}(X)| \geq |Z_m| \cdot \binom{\lfloor \frac{m}{2n} \rfloor}{\lfloor \varepsilon m \rfloor}.
\]
Taking logarithms dividing by $m$, and letting $m$ tend to infinity, to get
\[
h_\tp(X) (1 + \varepsilon(3n+4k)) \geq h_\tp(Y) - \varepsilon \ln(2 \varepsilon n).
\]
Choose $\varepsilon = (2n)^{-1} e^{-2(3n+4k)h_\tp(X)} < \frac{1}{2n}$. Then $\ln(2 \varepsilon n) = -2(3n+4k)h_\tp(X)$, yielding
\[
h_\tp(X) + \varepsilon(3n+4k) h_\tp(X) \geq h_\tp(Y) + 2\varepsilon (3n+4k)h_\tp(X).\]
Thus it follows that 
\[ h_\tp(X) - h_\tp(Y) \geq \varepsilon(3n+4k) h_\tp(X).
\]
Finally,
\begin{multline*}
h_\tp(X) - h_\tp(Y) \geq \varepsilon(3n+4k) h_\tp(X) \\ = h_\tp(X) (3n+4k) (2n)^{-1} e^{-2(3n+4k)h_\tp(X)} > h_\tp(X) e^{-2(3n+4k)h_\tp(X)}. \quad\qedhere
\end{multline*}

\end{proof}

\begin{corollary}
If $X$ is an infinite transitive subshift with alphabet of size $a$ and SFT cover
$\{X_n\}_{n\in\N}$, then for any $n$ and $k$ for which $X_n \supsetneq X_{n+k-1}$, we have that 
\[
h_\tp(X_n) - h_\tp(X_{n+k-1}) > (\ln 2) s^{-1} e^{-3(3s+4k)\ln a}, 
\]
where  $s = |\CL_n(X_n)|$. 
\end{corollary}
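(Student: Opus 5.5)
Since $X_n \supsetneq X_{n+k-1}$, some word $w$ with $n<|w|\le n+k-1$ is forbidden in $X_{n+k-1}$ but admissible in $X_n$. The plan is to recode $X_n$ as a nearest-neighbor shift, check that deleting $w$ is the removal of a single word, and then apply Theorem~\ref{th:entropy-drop} together with monotonicity of entropy.

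First, recode $X_n$ on words of length $n$, as in the proof of Theorem~\ref{th:main}. The vertices are the $s=|\CL_n(X_n)|$ words of length $n$, with edges given by overlaps. This gives a nearest-neighbor SFT $\tilde X_n$ on an alphabet of size $s$, conjugate to $X_n$.

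Next, check that $X_n$ is a nontrivial transitive shift. Transitivity of $X_n$ follows from transitivity of $X$, since the canonical cover agrees with $\CL(X)$ on words of length at most $n$ and gluing in $X$ works. Nontriviality holds because $X$ is infinite, so $X_n\supseteq X$ has positive entropy. To see the latter: $X_n$ is a transitive SFT, so if it had zero entropy it would be a single periodic orbit, which is impossible since $X_n\supseteq X$ is infinite.

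Now take $w\in \CL(X_n)\setminus\CL(X_{n+k-1})$ of length $|w|\le n+k-1$. It corresponds to a word $\tilde w$ of length $|w|-n+1\le k$ in $\tilde X_n$. If this length is $1$, pad $\tilde w$ to length $2$ by forbidding all of its admissible extensions; the main obstacle is doing this without breaking the single-word hypothesis. The simplest fix is to instead pick one extension, forbid only that one, and use that this removes at most what $X_{n+k-1}$ removes. Let $Y$ be the shift obtained from $\tilde X_n$ by removing $\tilde w$. Then $X_{n+k-1}$, recoded, lies inside $Y$, so
\[
h_\tp(X_n)-h_\tp(X_{n+k-1})\ge h_\tp(\tilde X_n)-h_\tp(Y) > h\, e^{-2(3s+4k)h}, \qquad h=h_\tp(X_n),
\]
by Theorem~\ref{th:entropy-drop}, used with alphabet size $s$ and word length at most $k$. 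The right-hand side decreases as $k$ grows, so replacing the actual word length by $k$ only weakens the bound.

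Finally, bound $h$ in both directions. From above, $h\le \ln a$, so $e^{-2(3s+4k)h}\ge e^{-2(3s+4k)\ln a}\ge e^{-3(3s+4k)\ln a}$. From below, we need $h\ge (\ln 2)/s$. Since $\tilde X_n$ is an irreducible, non-cycle graph on $s$ vertices, it contains two distinct simple cycles through a common vertex, each of length at most $s$. Concatenating these cycles gives at least $2^{N}$ paths of length about $sN$, so $h\ge (\ln 2)/s$. The factor $3$ in place of $2$ in the exponent leaves room for slack if this cycle-length estimate is slightly off, for instance cycles of length up to $2s$ or an adjustment from the padding step. Combining the two bounds gives
\[
h_\tp(X_n)-h_\tp(X_{n+k-1}) > (\ln 2)\, s^{-1} e^{-3(3s+4k)\ln a}.
\]
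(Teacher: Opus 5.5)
Your proposal is correct and follows the paper's proof essentially step for step: recode $X_n$ as a nearest-neighbor SFT on $s=|\mathcal{L}_n(X_n)|$ letters, apply Theorem~\ref{th:entropy-drop} to the single removed word, and then bound $h_\tp(X_n)$ below by $(\ln 2)/s$ (two simple cycles through a common vertex) and above by $\ln a$. Two of your worries are unnecessary: the padding step never arises, because $|w|>n$ already gives $|\tilde w|=|w|-n+1\ge 2$ (the paper simply takes $|w|=n+k-1$, so that $|\tilde w|=k$), and the factor $3$ is just slack, since cycles of length at most $s$ already suffice.
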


\begin{proof}
Take any such $X$, $n$, $k$. Since $X$ is transitive, each $X_n$ is also transitive. Also, there exists $w \in \CL_{n+k-1}(X_n) \setminus 
\CL_{n+k-1}(X_{n+k-1})$. By recoding, we may assume that $X_n$ is a nearest-neighbor SFT and that $w$ is of length $k$.

As a nearest-neighbor SFT, $X_n$ has alphabet $\CL_n(X_n)$ of size $s$, and $X_{n+k-1}$ is contained in the SFT $Y$ obtained by removing the $k$-letter word $w$ from $\CL_k(X)$. 
Therefore by Theorem~\ref{th:entropy-drop},
\begin{multline*}
h_\tp(X_n) - h_\tp(X_{n+k-1}) \geq h_\tp(X_n) - h_\tp(Y) \\
> h_\tp(X_n) e^{-2(3s+4k)h_\tp(X_n)}. 
\end{multline*}

It is clear that $h_\tp(X_n) \leq \ln a$. Finally, since $X_n$ is a transitive nearest-neighbor SFT  with $s$ letters, it has a cycle $C$ of minimum length, which must have length at most $s$, beginning and ending at some letter $L$. Since $X_n$ is infinite, there must be another cycle $C'$ beginning and ending at $L$, which we can assume to not repeat a letter, and so which also has length at most $s$. All concatenations of $C$ and $C'$ yield points of $X_n$, and so $h_\tp(X_n) \geq \frac{\ln 2}{s}$. Therefore,
\[
 h_\tp(X_n) e^{-2(3s+4k)h_\tp(X_n)} \geq (\ln 2) s^{-1} e^{-3(3s+4k)\ln a},
\]
completing the proof.
\end{proof}

It follows immediately from this corollary that the classes of language stable and entropy stable are the same.
\begin{theorem}
A transitive subshift is language stable if and only if it is entropy stable. 
\end{theorem}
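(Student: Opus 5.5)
The forward direction is already noted in the text: if $X$ is language stable, then for every $j$ there is $m$ with $X_m=X_{m+j}$. Then $h_\tp(X_m)-h_\tp(X_{m+j})=0<\Xi(\varepsilon,\ell,j)$ for every $\varepsilon$ and $\ell$, since $\Xi$ is strictly positive. So the hypothesis of Theorem~\ref{th:control-word-stats} holds and $X$ is entropy stable. The real content is the converse, which I would prove by contrapositive using the preceding corollary.

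Suppose $X$ is transitive and entropy stable but not language stable. First dispose of the case that $X$ is finite. A finite transitive subshift is a single periodic orbit. Its SFT cover satisfies $X_n=X$ for all large $n$, since a periodic orbit is an SFT. So $X$ is language stable, and we may assume $X$ is infinite, as the corollary requires.

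Failure of language stability gives some $j\ge1$ such that $X_m\supsetneq X_{m+j}$ for every $m$. Set $k=j+1$, so that $X_m\supsetneq X_{m+k-1}$ for all $m$. The corollary then gives, with $s_m=|\CL_m(X_m)|$,
\[
h_\tp(X_m)-h_\tp(X_{m+j}) > (\ln 2)\, s_m^{-1} e^{-3(3s_m+4j+4)\ln a}.
\]
Entropy stability with $\ell=1$ gives, for each $\varepsilon$, some $m=m(\varepsilon,1,j)$ with
\[
h_\tp(X_m)-h_\tp(X_{m+j})<\frac{\varepsilon^2}{(4/3)^2\xi(s_m)^2}.
\]

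The main obstacle is that the lower bound decays in $s_m$ and $m$ depends on $\varepsilon$, so the two bounds need not contradict each other directly. My plan is to compare them as functions of $s_m$. The quantity $\xi(s)$ contains the factor $s^{3(s^2+1)}$ together with a denominator of order roughly $s^{-2s^2-2}$. So $\xi(s)^2$ grows like $e^{cs^2\ln s}$, which is superexponential in $s$. The corollary's bound, by contrast, decays only like $s^{-1}e^{-9s\ln a}$, which is exponential in $s$.

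That comparison alone goes the wrong way: for large $s_m$, $\Xi$ is far smaller than the corollary's lower bound, so entropy stability is the stronger condition. What is actually needed is the inequality
\[
\frac{\varepsilon^2}{(16/9)\,\xi(s_m)^2}\le (\ln 2)\, s_m^{-1}e^{-3(3s_m+4j+4)\ln a}
\]
for all $s_m\ge2$ and all $\varepsilon\le1$. To get it, I would fix $\varepsilon=1$, or any $\varepsilon\le1$, and verify the inequality uniformly in $s\ge2$ using the explicit form of $\xi$. The constant $e^{-12(j+1)\ln a}$ does depend on $j$, so a uniform check will not work as stated.

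This is the step I would handle differently: let $\varepsilon\to0$. Take $\varepsilon$ small enough that $\varepsilon^2<(\ln2)\,e^{-12(j+1)\ln a}$. Then it suffices to show $\xi(s)^2\ge s^{-1}e^{-9s\ln a}\cdot\tfrac{9}{16}$ for every $s$, which is trivial because $\xi(s)\ge30$. With this choice, any $m=m(\varepsilon,1,j)$ supplied by entropy stability produces an entropy drop that is simultaneously below $\Xi$ and above the corollary's bound. This contradiction shows that $X$ is language stable.
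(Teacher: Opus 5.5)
Your route is the paper's, which just says the converse ``follows immediately'' from the corollary: the forward direction is immediate, and the converse plays the corollary (with $k=j+1$) against the entropy-stability bound (with $\ell=1$), choosing $\varepsilon$ small in terms of $j$ and $a$. The finite case, the negation of language stability and the choice of $\varepsilon$ are fine. The gap is in the one inequality that actually has to be checked.

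With $\varepsilon^2<(\ln 2)\,a^{-12(j+1)}$, you want $\Xi(\varepsilon,1,j)=\frac{9\varepsilon^2}{16\,\xi(s)^2}$ to lie below the corollary's bound $(\ln 2)\,s^{-1}a^{-9s-12(j+1)}$. For that you need
\[
\xi(s)^2\ \ge\ \tfrac{9}{16}\, s\, a^{9s}.
\]
You instead wrote $\xi(s)^2\ge \tfrac{9}{16}\,s^{-1}a^{-9s}$, inverting the factor $s^{-1}e^{-9s\ln a}$. The correct inequality does not follow from $\xi(s)\ge 30$: already for $s=a=2$ its right side is about $3\cdot 10^{5}$. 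Your earlier remark that the superexponential comparison ``goes the wrong way'' is also backwards. $\Xi$ being much smaller than the corollary's lower bound is exactly what gives the contradiction, since the entropy drop would then have to lie strictly between them.

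To repair it, use the growth you already identified. The denominator in the definition of $\xi$ is at most $1$, so $\xi(s)\ge 30\,s^{3(s^2+1)}$. Hence $\xi(s)^2\ge 900\,s^{6s^2+6}\ge s^{9s+1}$ for $s\ge 2$, because $6s^2-9s+5>0$.

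You also need to relate $s$ to $a$, which your sketch never does. If $a$ counts letters that never occur in $X$ and $s_m$ is small, the displayed inequality fails. Take $a=|\CL_1(X)|$. This does not change the SFT cover, and the bound $h_\tp(X_n)\le\ln a$ used in the corollary still holds. Now $s_m=|\CL_m(X)|\ge a$, so $\tfrac{9}{16}\,s\,a^{9s}\le s^{9s+1}\le\xi(s)^2$. Alternatively, since $s^{6s^2}$ eventually dominates $a^{9s}$, you could shrink $\varepsilon$ further to cover the finitely many $s$ below that threshold.

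With this repair, $h_\tp(X_m)-h_\tp(X_{m+j})<\Xi(\varepsilon,1,j)<(\ln 2)\,s_m^{-1}a^{-9s_m-12(j+1)}$ contradicts the corollary, and the proof is complete.
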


\section{Periodic point stability}

\label{sec:periodic}

\subsection{Period stable shifts and characteristic measures}
For a shift $(X, \sigma)$ and $p\in\N$, let $\CP_p = \CP_p(X)$ denote the set of periodic points of minimal period $p$.  

\begin{definition}\label{def:periodstable}
The shift $(X, \sigma)$ with SFT cover $\{X_n\}_{n\in\N}$ is {\em period stable} if for all $m\in\N$, there exist $n, p\in \N$ such that $\CP_p(X_n) = \CP_p(X_{n+m})$. 
\end{definition}

Note that the assumption that this holds for all $m\in\N$ is equivalent to the (seemingly weaker) assumption that this condition holds for infinitely many $m\in\N$.  We note that the property of being period stable is preserved under topological conjugacies.

\begin{lemma}
If  $X\subseteq\CA^{\Z}$ is a subshift that is period stable and  $Y\subseteq\mathcal{B}^{\Z}$ is a subshift that is topologically conjugate to $X$, then $Y$ is period stable.
\end{lemma}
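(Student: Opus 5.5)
Let $\psi\colon X\to Y$ be a conjugacy. By the Curtis--Hedlund--Lyndon theorem, $\psi$ is a sliding block code with block map $\Psi$ of some range $R$, and $\psi^{-1}$ is a sliding block code with block map $\Psi'$ of range $R$ (enlarge $R$ so one range works for both). The plan is to show that $\Psi$ and $\Psi'$ act on high levels of the SFT covers and shift the level index by only a bounded amount. Write $\{X_n\}$ and $\{Y_n\}$ for the SFT covers.

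\emph{Step 1: the block maps move between cover levels.} As in the proof of the elementary lemma in Section~2, $\Psi$ sends words of $\mathcal{L}_{2R+k}(X)$ to words of $\mathcal{L}_k(Y)$. Hence, applied as a sliding block code to any point of $X_{n+2R}$, it yields a point of $Y_n$: each length-$n$ window of the image comes from a length-$(n+2R)$ window of the source, which lies in $\mathcal{L}(X)$. So $\psi$ extends to $\psi_n\colon X_{n+2R}\to Y_n$, and likewise $\psi^{-1}$ extends to $\psi'_n\colon Y_{n+2R}\to X_n$.

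Next I want $\psi'\circ\psi$ to equal the identity on $X_{n+4R}$ for $n$ large. The lemma says $\Psi'\circ\Psi$ acts on words of $\mathcal{L}(X)$ of length at least $4R+1$ by deleting $2R$ letters from each end. Every $(4R+1)$-window of a point of $X_{n+4R}$ with $n\ge 1$ lies in $\mathcal{L}(X)$, so the composite is the identity there. These extended maps commute with $\sigma$, so they preserve minimal periods: a point of minimal period $p$ maps to a point of period dividing $p$, and injectivity together with the left inverse forces equality. Thus $\psi$ restricts to injections $\CP_p(X_{n+2R})\to\CP_p(Y_n)$, and $\psi^{-1}$ to injections $\CP_p(Y_{n+2R})\to \CP_p(X_n)$.

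\emph{Step 2: transfer stability.} Given $m$, apply period stability of $X$ with $m' = m+4R$ to get $n,p$ with $\CP_p(X_n)=\CP_p(X_{n+m+4R})$. Since $\CP_p(Y_{n'+m})\subseteq \CP_p(Y_{n'})$ always holds, it suffices to prove the reverse inclusion for $n'=n+2R$. Take $y\in\CP_p(Y_{n+2R})$. Then $x=\psi^{-1}(y)\in \CP_p(X_n)=\CP_p(X_{n+m+4R})$, so $\psi(x)\in \CP_p(Y_{n+m+2R})$. It remains to check $\psi(\psi^{-1}(y))=y$. This holds because $\Psi\circ\Psi'$ is the identity on points of $Y_{n+2R}$ once $n+2R\ge 4R+1$, by the symmetric version of the lemma. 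We can arrange $n$ large by the remark that stability at level $n$ persists after increasing $m$, or simply by taking $m'$ larger and passing to a subrange. Hence $\CP_p(Y_{n+2R})=\CP_p(Y_{n+2R+m})$.

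\emph{Main obstacle.} The delicate point is ensuring $n$ is large enough for the identity-composite claim, i.e.\ $n\gtrsim 4R$. If the stable level $n$ is small, I would instead use that $\CP_p(X_n)=\CP_p(X_{n+M})$ implies $\CP_p(X_{n'})=\CP_p(X_{n'+M-(n'-n)})$ for every $n\le n'\le n+M$, by monotonicity of the cover. So by choosing $M=m+8R$ I can shift to $n'=\max(n,4R+1)$, provided $n'\le n+4R$, which always holds. This lets the argument above go through.
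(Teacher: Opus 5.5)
Your proof is correct and is essentially the paper's argument: extend $\psi^{\pm1}$ to the cover levels with an index shift of $2R$, use that the composites are the identity from level $4R+1$ on, apply stability with window $m+4R$ after pushing $n$ above $4R$, and conclude $\CP_p(Y_{n+2R})=\CP_p(Y_{n+2R+m})$. Your element chase is the pointwise form of the paper's sandwich $\psi(X_{n+4R})\subseteq Y_{n+2R}\subseteq\psi(X_n)$, and your final paragraph correctly replaces the misphrased aside that stability at level $n$ ``persists after increasing $m$''; it actually persists under decreasing $m$.
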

\begin{proof}
Let $\varphi\colon X\to Y$ be a topological conjugacy between $X$ and $Y$.  Let $R$ be a range that is common to both $\varphi$ and $\varphi^{-1}$.  For words $w\in\mathcal{L}_n(X)$ with $n>2R$, we abuse notation by writing $\varphi(w)$ for the word of length $n-2R$ obtained by applying the range $R$ block map implementing $\varphi$ to $w$, and similarly we do this for $\varphi^{-1}(w)$ when $w\in\mathcal{L}_n(Y)$.  We also extend the domain of $\varphi$ from $X$ to $X_n$, for all $n>2R$, by defining it to be the function determined by this range $R$ block code on the larger domain.  Similarly for $\varphi^{-1}$ and $Y_n$.  Note that $\varphi^{-1}\circ\varphi$ is defined on $X_n$ only when $n>4R$ but, when it is defined, it is the identity map (defined by the identity block code of range $2R$).  This implies, when $n>4R$, that the shifts $X_n$ and $\varphi(X_n)$ are topologically conjugate and $\varphi$ is a conjugacy.  In particular, when $n>4R$, we have $\varphi(\CP_p(X_n))=\CP_p(\varphi(X_n))$ for any $p$.

Let $\{X_n\}_{n=1}^{\infty}$ be the SFT cover of $X$ and let $\{Y_n\}_{n=1}^{\infty}$ be the SFT cover of $Y$.  Fix $m\in\N$ and choose $n,p\in\N$ such that $\CP_p(X_n)=\CP_p(X_{n+(4R+m)})$.  Without loss of generality, we can assume that $n>4R$ (otherwise we can take $n^{\prime},p^{\prime}$ such that $\CP_{p^{\prime}}(X_{n^{\prime}})=\CP_{p^{\prime}}(X_{n^{\prime}+8R+m})$ and set $n=n^{\prime}+4R$ and $p=p^{\prime}$).  Notice that for any $w\in\mathcal{L}_{n+4R}(X)$, we have $\varphi(w)\in\mathcal{L}_{n+2R}(Y)$.  Therefore $\varphi(X_{n+4R})\subseteq Y_{n+2R}$.  Similarly, $\varphi^{-1}(Y_{n+2R})\subseteq X_n$ and so $\varphi(\varphi^{-1}(Y_{n+2R}))\subseteq\varphi(X_n)$.  Since $n>2R$, $\varphi(\varphi^{-1}(Y_{n+2R}))=Y_{n+2R}$ and so $Y_{n+2R}\subseteq\varphi(X_n)$.  In other words, 
$$ 
\varphi(X_{n+4R})\subseteq Y_{n+2R}\subseteq\varphi(X_n). 
$$ 
Similarly, we have 
$$ 
\varphi(X_{n+4R+m})\subseteq Y_{n+2R+m}\subseteq\varphi(X_{n+m}). 
$$ 
Since $\CP_p(X_n)=\CP_p(X_{n+4R+m})$, we have that $\CP_p(X_n)=\CP_p(X_{n+k})$ for all $0\leq k\leq4R+m$.  In particular, 
$$
\CP_p(X_n)=\CP_p(X_{n+4R})=\CP_p(X_{n+m})=\CP_p(X_{n+4R+m}). 
$$
Since $\CP_p(\varphi(X_{n+4R}))=\varphi(\CP_p(X_{n+4R}))=\varphi(\CP_p(X_n))=\CP_p(\varphi(X_n))$, we have that 
$$
\CP_p(Y_{n+2R})=\varphi(\CP_p(X_n)). 
$$
Similarly, we have 
$$
\CP_p(Y_{n+2R+m})=\varphi(\CP_p(X_{n+m})). 
$$
So $\CP_p(Y_{n+2R})=\CP_p(Y_{n+2R+m})$.  As this holds for any $m\geq 1$, the subshift $Y$ is period stable.
\end{proof}

We next check that this condition suffices for producing a characteristic measure. 
\begin{theorem}
Every period stable subshift has a characteristic measure. 
\end{theorem}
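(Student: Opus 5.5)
We will build the characteristic measure as a weak* limit of uniform measures on finite sets of periodic points drawn from the ``halo'' $\CP_p(X_n)$. If $X$ has a periodic point, the uniform measure on its orbit set $\CP_p(X)$ (for a $p$ with $\CP_p(X)\neq\emptyset$) is a finite set permuted by every automorphism, so that case is immediate. We therefore assume $X$ has no periodic points.

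For each $m\in\N$, period stability provides $n_m,p_m$ with $\CP_{p_m}(X_{n_m})=\CP_{p_m}(X_{n_m+m})$. Since the $X_k$ are nested, this gives $\CP_{p_m}(X_{n_m})=\CP_{p_m}(X_{n_m+k})$ for all $0\le k\le m$. We may assume this common set $P_m$ is nonempty; otherwise the condition is vacuous. This is the delicate point: if emptiness is allowed, the definition is trivially satisfied, so I would read Definition~\ref{def:periodstable} as requiring nonemptiness. Let $P_m$ be this finite set and let $\nu_m$ be the uniform probability measure on $P_m$. It is $\sigma$-invariant, since $P_m$ is a union of orbits. Because $X$ has no periodic points and $X=\bigcap X_n$, we need $n_m\to\infty$. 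Indeed, if $n_m$ stayed bounded along a subsequence, some fixed $\CP_p(X_n)$ would lie in $X_{n+m}$ for arbitrarily large $m$, hence in $X$, which is a contradiction. Consequently every weak* limit $\mu$ of $(\nu_m)$ is supported on $X$.

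Fix $\varphi\in\Aut(X)$ of range $R$ with block maps $\Phi,\Phi^{-1}$. For $m>4R$ and $n_m>4R$, the argument in the proof of the Lemma above shows that $\Phi$ defines a map $X_{n_m+2R}\to X_{n_m}$, and $\Phi^{-1}$ does the same. Moreover $\Phi^{-1}\circ\Phi$ is the identity on $X_{n_m+4R}$. A shift-commuting map preserves periodic points and does not increase minimal period; since it is invertible there, it preserves minimal period exactly. Hence $\Phi$ maps $\CP_{p_m}(X_{n_m+4R})=P_m$ injectively into $\CP_{p_m}(X_{n_m+2R})=P_m$. This is where stability across the window $[n_m,n_m+m]$ is used. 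An injective self-map of a finite set is a bijection, so $\Phi$ permutes $P_m$ and $\Phi_*\nu_m=\nu_m$.

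To pass to the limit, choose a subsequence with $\nu_{m}\to\mu$. For a cylinder $[w]$, the quantity $\Phi_*\nu_m([w])$ depends only on $\nu_m$ of cylinders of length $|w|+2R$, through the block map. The same formula computes $\varphi_*\mu([w])$ from $\mu$, because $\mu$ lives on $X$ where $\Phi$ implements $\varphi$, and cylinder indicators are continuous. Taking limits gives $\varphi_*\mu([w])=\mu([w])$ for all $w$, so $\mu$ is $\Aut(X)$-invariant.

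The main obstacle is the bookkeeping in the third paragraph. We must ensure the extended block code genuinely maps $P_m$ into itself with the same minimal period, which needs the window length $m$ to exceed $4R$, together with the $n_m\to\infty$ argument showing the limit lands in $X$.
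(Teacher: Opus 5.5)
Your construction matches the paper's. You take uniform measures on $\CP_{p_m}(X_{n_m})=\CP_{p_m}(X_{n_m+m})$, pass to a weak* limit, and show that a range-$R$ automorphism permutes this finite set once $m>4R$. You phrase the permutation step as an injection of $\CP_{p_m}(X_{n_m+4R})$ into $\CP_{p_m}(X_{n_m+2R})$; the paper uses the conjugacy $X_{n_m+m}\cong\varphi(X_{n_m+m})\subseteq X_{n_m}$ plus a cardinality count, which is the same idea.

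Your remark that the sets must be nonempty is correct. The paper uses this tacitly when it divides by $|\CP_m|$. Without it the definition is vacuous: if $X$ has no fixed points, then $\CP_1(X_n)=\emptyset$ for all large $n$, so $p=1$ would witness the condition for every $m$.

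The step that does not hold up as written is the claim $n_m\to\infty$. Your argument fixes $p$. But if $n_m=n$ along a subsequence, the periods $p_m$ may still be unbounded along it. Then no single finite set $\CP_p(X_n)$ is forced into $X_{n+m}$ for arbitrarily large $m$, and the contradiction with "no periodic points in $X$" does not follow. Ruling this case out would need a separate argument about periodic points of large period in the irreducible components of $X_n$.

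You do not need the claim at all. Support in $X$ follows, as in the paper, from $\nu_m$ being carried by $X_{n_m+m}$, since $n_m+m\ge m\to\infty$. The hypothesis $n_m>4R$ is also superfluous. The map $\Phi$ sends $X_{n_m+4R}$ into $X_{n_m+2R}$, and $\Phi^{-1}\circ\Phi$ is the identity on $X_{n_m+4R}$, as soon as the $(4R+1)$-blocks of points of $X_{n_m+4R}$ are admissible. That holds for every $n_m\ge 1$.

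Delete that step and your proof is complete; the separate case where $X$ has periodic points then becomes unnecessary, though harmless. Your passage to the limit, via the finitely many cylinders of length $|w|+2R$, is more explicit than the paper's one-line version.
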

\begin{proof}
Assume that $(X, \sigma)$ is a period stable shift and let $\{X_n\}_{n\in\N}$ be its SFT cover.  
For each $m\in\N$, choose $n_m, p_m\in\N$ such that $\CP_{p_m}(X_{n_m})=\CP_{p_m}(X_{n_m+m})$.  For ease of notation, we write $\CP_m:=\CP_{p_m}(X_{n_m})=\CP_{p_m}(X_{n_m+m})$ and define 
$$ 
\mu_m:=\frac{1}{|\mathcal{P}_m|}\cdot\sum_{x\in\mathcal{P}_m}\delta_x, 
$$ 
where $\delta_x$ denotes the Dirac measure at $x$.  Then the measure $\mu_m$ is an invariant measure supported on $X_{n_m+m}$, which  is a subshift of $X_{n_m}$.  Let $\mu$ be a weak* limit point of the sequence $\{\mu_m\}_{m=1}^{\infty}$.  We claim that $\mu$ is a characteristic measure supported on $X$.

Note that for any $w\notin\mathcal{L}(X)$ and any $m>|w|$, we have $\mu_m([w])=0$,  and therefore we also have that $\mu([w])=0$.  It follows that  the support of $\mu$ does not intersect $[w]$ for any forbidden $w\in\mathcal{F}(X)$.  In particular, the support of $\mu$ is contained in $X$.

Let $\varphi\in\Aut(X)$ be given and choose $R\in\N$ such that $\varphi\in\Aut_R(X)$.  Let $\Phi$ and $\Phi^{-1}$ be range $R$ block codes that implement $\varphi$ and $\varphi^{-1}$, respectively.  
For any $m\geq4R$, note that $n_m+m\geq4R+1$ and so $\Phi^{-1}\circ\Phi$ implements the identity map on $X_{n_m+m}$ as a range $2R$ block code.  Therefore $$(\varphi^{-1}\circ\varphi)\colon X_{n_m+m}\to X_{n_m+m-4R}$$ is the identity map.  It follows that $X_{n_m+m}$ is topologically conjugate to $\varphi(X_{n_m+m})$ with the conjugacies implemented by $\varphi$ and $\varphi^{-1}$. 
In general $X_{n_m+m}$ and $\varphi(X_{n_m+m})$ are not equal.  However, both $X_{n_m+m}$ and $\varphi(X_{n_m+m})$ are subshifts of $X_{n_m}$, since $\varphi(X_{n_m+m})\subseteq X_{n_m+m-2R}\subseteq X_{n_m}$ (we have used the fact that $m>2R$).  Therefore, $\CP_{p_m}(\varphi(X_{n_m+m}))\subseteq\CP_{p_m}(X_{n_m})$.  Since $X_{n_m+m}$ and $\varphi(X_{n_m+m})$ are topologically conjugate, we have $|\CP_{p_m}(X_{n_m+m})|=|\CP_{p_m}(\varphi(X_{n_m+m}))|$.  It therefore follows from our assumption, that $\CP_{p_m}(X_{n_m})=\CP_{p_m}(X_{n_m+m})$, that $\CP_{p_m}(\varphi(X_{n_m+m}))=\CP_{p_m}(X_{n_m+m})$.  In particular, $\varphi$ acts like a permutation on $\mathcal{P}_m$ and so $\varphi_*\mu_m=\mu_m$.  Since this holds for any $m\geq4R+1$, it follows that  $\varphi_*\mu=\mu$.  Since $\varphi\in\Aut(X)$ is arbitrary, this holds for all elements of $\Aut(X)$ and thus $\mu$ is a characteristic measure.
\end{proof} 

We devote the remainder of this section to an example showing that this theorem covers more than the subshifts that are period stable for simple reasons, such as those with a periodic point or those that are language stable.  
{\em A priori}, we do not know if the subshift we construct has non-trivial automorphisms and a non-trivial automorphism group, and so in Section~\ref{sec:nontrivial} we indicate how to obtain such a system. 

\subsection{Construction of a period stable subshift with no periodic points that is not language stable}
\label{sec:construction}

Let $Y\subseteq\{0,1\}^{\Z}$ be a minimal Sturmian shift satisfying the following two conditions:
    \begin{itemize}
        \item $11$ is forbidden in $Y$; 
        \item $000$ is forbidden in $Y$.
    \end{itemize}
Sturmians satisfying these two conditions are easy to construct, for instance by taking the cutting sequence~\cite[Chapter 6]{Pyth} of a line with irrational slope $\alpha\in(1/2,1)$.  Let $y\in Y$ be a fixed element of $Y$ with $y(1)=0$.  We use $y$ to construct a subshift $X\subseteq\{0,1\}^{\Z}$ that is period stable but not language stable and has no periodic points.  To define $X$, we specify its set $\mathcal{F}$ of forbidden words, setting $\mathcal{F}$ to be the union of four sets of words, 
$$
\mathcal{F}=\mathcal{W}_1\cup\mathcal{W}_2^{11}\cup\mathcal{W}_2^{00}\cup\mathcal{W}_2^{000}, 
$$
and we specify these words in the construction. 

We refer to the elements of $\mathcal{W}_1$ as the {\em Type I} forbidden words and to the elements of $\mathcal{W}_2^{11}\cup\mathcal{W}_2^{00}\cup\mathcal{W}_2^{000}$ as the {\em Type II} forbidden words, reflecting the  different roles the words play in our construction.  Roughly speaking, the Type I words ensure $X$ is not language stable and the Type II words ensure $X$ does not contain any periodic points.

\subsubsection{Some auxiliary results}
Before constructing the forbidden words $\CF$ that define the subshift, we introduce some lemmas that are used to control the entropy.  For a topologically transitive shift $Z$ of finite type and for a word $w\in\CL(Z)$, let $Z(w)$ denote the subshift obtained by setting 
$$ 
Z(w):=\{z\in Z\colon\sigma^i(z)\notin[w]\text{ for all }i\in\Z\}. 
$$
The first result we use is due to Lind.
\begin{lemma}[{Lind~\cite[Theorem 3]{lind}}]\label{lem:lind1}
Let $Z$ be a topologically transitive subshift of finite type and let $\varepsilon>0$ be given.  There exists $k\in\N$ such that for any word $w\in\mathcal{L}(Z)$ with $|w|\geq k$, we have $|h_{\tp}(Z)-h_{\tp}(Z(w))|<\varepsilon$.
\end{lemma}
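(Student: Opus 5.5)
This lemma is cited from Lind, but a proof can be sketched in the spirit of Theorem~\ref{th:entropy-drop}. By recoding, I will assume $Z$ is a nearest-neighbor transitive SFT on an alphabet of size $s$. Only the quantity $h_{\tp}(Z)-h_{\tp}(Z(w))$ needs bounding from above, since $Z(w)\subseteq Z$ gives the other inequality. I will not need $Z(w)$ to be transitive. The plan is to exhibit a large family of words avoiding $w$ and count them.

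The key step is to find many admissible words in $Z$ of a fixed length $N$ that avoid \emph{every} word of length $k$ that is sufficiently "rare." First pick $N$ with $\frac{1}{N}\log|\CL_N(Z)|<h_{\tp}(Z)+\varepsilon/4$. Then pick a sub-family $\mathcal{B}\subseteq\CL_N(Z)$ of "good blocks" with
\[
\tfrac1N\log|\mathcal{B}|>h_{\tp}(Z)-\varepsilon/4 .
\]
These blocks can be freely concatenated using connecting words of bounded length $c\le s$, which exist by transitivity. Concatenations $b_1u_1b_2u_2\cdots$ give a subshift $Z_{\mathcal{B}}\subseteq Z$ whose entropy is at least $\frac{1}{N+c}\log|\mathcal{B}|$; this exceeds $h_{\tp}(Z)-\varepsilon$ once $N$ is large.

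Next I must make this construction avoid a given long word $w$. Here I use an entropy-typical-words argument. Take $\mu$ to be the measure of maximal entropy of $Z$, which is Markov and exponentially mixing. Every word $v$ of length $k$ then satisfies $\mu([v])\le Ce^{-kh_{\tp}(Z)}$. Hence the fraction of $\mu$-typical length-$N$ blocks containing $v$ is at most $NCe^{-kh}$, which tends to $0$ as $k\to\infty$, uniformly in $v$. Removing blocks that contain $w$ therefore costs a negligible proportion of $\mathcal{B}$, and the sub-family $\mathcal{B}_w$ still has $\frac1N\log|\mathcal{B}_w|>h-\varepsilon/2$. Note the order of choices: $N$ is chosen first, and $k$ is then taken so that $Nk$-dependent losses are small.

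The main obstacle is that $w$ can still appear straddling block boundaries or inside connectors. I would handle this by taking $k>2(N+c)$: any occurrence of $w$ then contains a full block $b_i$ together with its neighbours. To exclude such straddling occurrences, I would choose the blocks so that a fixed marker word separates them, and make the whole construction a sofic or renewal system whose entropy is still close to $h$. Then $w$ can occur in a concatenation only if it is a subword of a concatenation of $\lceil k/(N+c)\rceil$ blocks. If $w$ does occur, I would instead forbid a single block $b^\ast$ appearing in that occurrence, namely one chosen from $w$'s decomposition, removing one block from $\mathcal{B}$. This costs only $\log(1-1/|\mathcal{B}|)$, and since every occurrence of $w$ must use that particular block in that position, $w$ is avoided. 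I expect this boundary bookkeeping to be the delicate part. Alternatively, one can simply invoke Theorem~\ref{th:entropy-drop}'s counting in reverse, or cite Lind's perturbation result directly.
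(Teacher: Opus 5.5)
The paper gives no proof of this lemma; it only cites Lind. Your sketch therefore has to stand on its own, and its decisive step is not established. Once $k>2(N+c)$, no block of length $N$ can contain $w$. So the Parry-measure paragraph removes nothing, and everything rests on occurrences of $w$ that straddle the block grid.

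You claim that deleting a single block $b^\ast$ kills all such occurrences (``every occurrence of $w$ must use that particular block in that position''). That needs the parsing of $w$ into blocks to be unique, and you delegate this to a marker word you never construct. Without unique parsing the claim fails. If $b^\ast=w_0\cdots w_{N-1}$, then $w$ can in general still occur with its first full block at an offset $o\neq 0$, using the block $w_o\cdots w_{o+N-1}$, which is still available.

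A marker system is also not routine to build. You would need a word $m$ that occurs in concatenations only at prescribed positions, while the admissible blocks still have growth rate close to $h_{\tp}(Z)$. That is essentially an instance of the lemma itself. Running Theorem~\ref{th:entropy-drop} ``in reverse'' does not help either: it bounds the entropy drop from below, and you need an upper bound.

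Your outline is otherwise right, and the gap closes without markers, because the grid has only finitely many phases.
\begin{itemize}
\item Assume $h_{\tp}(Z)>0$. Otherwise $Z$ is a single periodic orbit and the claim is trivial under the convention $h_{\tp}(\emptyset)=0$.
\item Choose letters $a,b$ so that the set $\mathcal{B}$ of words in $\CL_N(Z)$ beginning with $a$ and ending with $b$ satisfies $|\mathcal{B}|\ge e^{Nh_{\tp}(Z)}/s^2$.
\item Fix one connector $u$ with $bua\in\CL(Z)$ and $|u|\le s$, and put $L=N+|u|$. A fixed connector, rather than your variable $u_i$, makes the grid period constant and the coding injective.
\item In a point $\sigma^i(\cdots b_{-1}ub_0ub_1u\cdots)$, any occurrence of a word $w$ with $|w|\ge 2L$ contains a full block beginning at some offset $o\in\{0,\dots,L-1\}$ of $w$. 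That block equals $c_o:=w_o w_{o+1}\cdots w_{o+N-1}$.
\item Delete the at most $L$ words $c_0,\dots,c_{L-1}$ from $\mathcal{B}$. This gives a subshift of $Z(w)$ with entropy at least $\frac{1}{L}\log(|\mathcal{B}|-L)$.
\item This bound exceeds $h_{\tp}(Z)-\varepsilon$ once $N$ is large, uniformly in $w$. Hence $k=2L$ works.
\end{itemize}
This argument yields only a drop of order $1/|w|$, but that is all the lemma requires.
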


Lind's result holds in greater generality but we could not find a specific reference in the literature.  So, we state and prove the generalization for subshifts that are not necessarily topologically transitive.

\begin{lemma}\label{lem:lind}
Let $Y$ be a subshift of finite type and let $\varepsilon>0$ be given.  There exists $k\in\N$ such that for any word $w\in\mathcal{L}(Y)$ with $|w|\geq k$, we have $|h_{\tp}(Y)-h_{\tp}(Y(w))|<\varepsilon$.
\end{lemma}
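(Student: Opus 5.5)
The plan is to reduce to Lind's transitive result, Lemma~\ref{lem:lind1}, by decomposing $Y$ into its irreducible pieces. After recoding, I will assume $Y$ is a nearest-neighbor shift of finite type presented by a finite directed graph $G$. Let $C_1,\dots,C_r$ be the strongly connected components of $G$ that carry at least one cycle, and let $Z_i\subseteq Y$ be the transitive shift of finite type of bi-infinite paths that stay in $C_i$. The standard fact I will use is that for any subshift $W\subseteq Y$,
\[
h_\tp(W)=\max_i h_\tp(W\cap Z_i).
\]
This holds because every ergodic measure on $W$ is supported on paths that eventually, in both directions, remain in a single component. More precisely, an invariant measure gives zero mass to paths that pass between components, since such a path can make at most $r$ such transitions. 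The variational principle then gives the formula. Applying it to $W=Y$ and to $W=Y(w)$, and noting $Y(w)\cap Z_i=Z_i(w)$ (with $Z_i(w)=Z_i$ if $w\notin\CL(Z_i)$), I get
\[
h_\tp(Y)=\max_i h_\tp(Z_i),\qquad h_\tp(Y(w))=\max_i h_\tp(Z_i(w)).
\]

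Next, for each $i$ with $Z_i$ transitive, Lemma~\ref{lem:lind1} gives some $k_i$ such that every $w\in\CL(Z_i)$ with $|w|\geq k_i$ satisfies $h_\tp(Z_i)-h_\tp(Z_i(w))<\varepsilon$. If $Z_i$ is a single periodic orbit, so that its entropy is $0$, the bound is trivial since entropies are nonnegative. I then set $k=\max_i k_i$ (with $k=1$ if there are no components).

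Finally, given $w\in\CL(Y)$ with $|w|\geq k$, choose $i$ with $h_\tp(Z_i)=h_\tp(Y)$. Then
\[
h_\tp(Y)-h_\tp(Y(w))\leq h_\tp(Z_i)-h_\tp(Z_i(w))<\varepsilon,
\]
whether or not $w\in\CL(Z_i)$. Since $Y(w)\subseteq Y$, the difference is nonnegative, so this gives the absolute value bound.

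The main obstacle is justifying the entropy decomposition formula cleanly. I would do it with the variational principle and the observation about ergodic measures above. Alternatively, I could argue by counting: a path of length $N$ in $G$ splits into at most $r$ segments, each lying within a single component, joined by at most $r$ transitions. There are only polynomially many ways to choose the splitting, so the growth rate of such paths is the maximum of the growth rates over the components.
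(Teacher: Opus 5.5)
Your proposal is correct and follows the paper's route. The paper also picks a transitive SFT $Z\subseteq Y$ with $h_\tp(Z)=h_\tp(Y)$, applies Lemma~\ref{lem:lind1} to $Z$, and uses $Z(w)\subseteq Y(w)$, noting $Z(w)=Z$ when $w\notin\CL(Z)$. The differences are minor: you justify the full-entropy irreducible component by the standard graph decomposition where the paper cites \cite[Lemma 4.7]{CKP}, and since your maximizing component is chosen independently of $w$, taking $k=\max_i k_i$ and the decomposition formula for $h_\tp(Y(w))$ are more than you need.
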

\begin{proof} 
Since $Y$ is a subshift of finite type, there is a topologically transitive subshift of finite type, $Z$, contained in $Y$, that satisfies $h_{\tp}(Z)=h_{\tp}(Y)$. (To see this, one can apply, for example, \cite[Lemma 4.7]{CKP} in the special case that $a$ is the size of the language of $Y$, $f$ is the length of the longest forbidden word among a finite set of forbidden words used to define $Y$, $X:=Y$, and $\varphi$ is the identity map.)  By Lemma~\ref{lem:lind1}, we can find $k\in\N$ such that for any word $w\in\mathcal{L}(Z)$ with $|w|\geq k$, we have $|h_{\tp}(Z)-h_{\tp}(Z(w))|<\varepsilon$ for any $w\in\mathcal{L}(Z)$ with $|w|\geq k$.  
By construction of $Z$, this implies that $|h_{\tp}(Y)-h_{\tp}(Z(w))|<\varepsilon$.  Since $Z(w)\subseteq Y(w)$, it follows that  $|h_{\tp}(Y)-h_{\tp}(Z(w))|<\varepsilon$.  Finally note that if $w\in\mathcal{L}(Y)\setminus\mathcal{L}(Z)$ then $Z=Z(w)$ and since $Z(w)\subseteq Y(w)$ we have $|h_{\tp}(Y)-h_{\tp}(Z(w))|=0$.
\end{proof}

\subsubsection{The set of Type I forbidden words $\mathcal{W}_1$} 
\label{subsec:W1}
For each integer $n>0$, define 
$$
u_n:=111y(1)y(2)y(3)\dots y(4n)111
$$
where $y(i)$ is the $i^{th}$ coordinate of the element $y\in Y$ that we fixed at the beginning of this construction.  We index the letters of $u_n$ starting from $1$, writing 
$$ 
u_n(i)=\left\{\begin{tabular}{cl} 
$1$ & if $1\leq i\leq3$; \\ 
$y(i-3)$ & if $4\leq i\leq4n+3$; \\ 
$1$ & if $4n+4\leq i\leq4n+6$. 
\end{tabular}\right.
$$ 
We set  
$$
\mathcal{W}_1=\{u_n\colon n\in\N\}.
$$
Note that no two elements of $\mathcal{W}_1$ have the same lengths and the lengths of the elements of $\mathcal{W}_1$ are the numbers of the form $4n+6$.  In particular, the lengths form a syndetic subset of $\N$.  Further note that the word $y(1)y(2)y(3)\dots y(4n)$ does not have $11$ as a subword, because $11$ is a forbidden word in the shift $Y$. Therefore, the only places that $11$ occurs as a subword of $u_n$ are:
    \begin{enumerate}
        \item within its prefix $111$ (the word $u_n(1)u_n(2)u_n(3)$); \label{pos1}
        \item within its suffix $111$ (the word $u_n(4n+4)u_n(4n+5)u_n(4n+6)$); \label{pos2}
        \item if $u_n(4n+3)=1$, then within its suffix $1111$ (the word $u_n(4n+3)u_n(4n+4)u_n(4n+5)u_n(4n+6)$). \label{pos3}
    \end{enumerate}
Recall that we chose $y\in Y$ such that $y(1)=0$, and it follows that $u_n$ begins with the prefix $1110$ for all $n$.  It ends with the suffix either $0111$ or $01111$ depending on whether $u_n(4n+3)$ is $0$ or $1$.

\subsubsection{The set of Type II words in $\mathcal{W}_2^{11}$}\label{sec:woneone}
Consider the shift 
$$
\mathcal{X}_1=\{x\in\{0,1\}^{\mathbb{Z}}\colon\sigma^i(x)\notin[u_n]\text{ for all }i\in\Z\text{ and }n\in\N\}. 
$$
Let $\mathcal{H}_1$ be the subshift of finite type whose only forbidden word is $111$.  Note that if $h\in\mathcal{H}_1$, then $h$ does not contain $111$ as a subword.  
In particular, since $u_n$ itself contains $111$ as a subword, $h$ does not contain $u_n$ as a subword for any $n\in\N$.  It follows that $\mathcal{H}_1\subseteq\mathcal{X}_1$ and so 
$$ 
h_{\tp}(\mathcal{X}_1)\geq h_{\tp}(\mathcal{H}_1)>\log_2(1.839).
$$ 
Hence the topological entropy of $\mathcal{X}_1$ is positive.  
Furthermore, $\mathcal{X}_1$ is topologically mixing: if $w_1,w_2\in\mathcal{L}(\mathcal{X}_1)$ then $^{\infty}0w_10^kw_20^{\infty}\in\mathcal{X}_1$ for any $k\geq3$ (since $u_n$ does not contain $000$ as a subword for any $n$).

We inductively construct the set $\mathcal{W}_2^{11}$.  Our goal is to forbid additional words from the language of $\mathcal{X}_1$ that eliminate all periodic points that contain $11$ as a subword, while ensuring that the resulting shift has entropy close to that of $\mathcal{X}_1$ and such that the newly forbidden words do not occur as subwords of $y$.

Let $\mathcal{P}_0\subseteq\mathcal{X}_1$ be the collection of all periodic points that contain $11$ as a subword.  Among the elements of $\mathcal{P}_0$,  choose a periodic point $p_0$ that has the least possible minimal period.  Find a word $v_0\in\mathcal{L}(\mathcal{X}_1)$ such that $v_0$ has $11$ as a prefix and such that $p_0=\dots v_0v_0v_0v_0v_0\dots$ is (a shift of) the bi-infinite self-concatenation of $v_0$.  Next we choose an integer $k_0>0$ sufficiently large such that $v_0^{k_0}$ does not occur as a subword of $y$; we note we can always find such a word, as $y\in Y$ can not contain arbitrarily long subwords that are periodic with a fixed period because $Y$ is  a Sturmian shift.  
Define the word $z_0:=v_0^{100k_0}11$.  Since $v_0$ begins with prefix $11$, the word $z_0$ is periodic with period $|v_0|$ and has $11$ as both a prefix and a suffix.  Define 
$$ 
\mathcal{P}_1:=\{p\in\mathcal{P}_0\colon z_0\text{ is not a subword of }p\}. 
$$ 
We continue this procedure inductively.  Assume that we have constructed words $z_0,\dots,z_r$ such that: 
\begin{enumerate}
    \item each $z_i$ is periodic; 
    \item the lengths of the $z_i$ are increasing ; 
    \item each $z_i$ has $11$ as both a prefix and a suffix; 
    \item there is a word, $v_i$, that has $11$ as a prefix, and an integer $k_i$ such that $z_i=v_i^{100k_i}11$ where $k_i$ is sufficiently  large that $v_i^{k_i}$ does not occur as a subword of $y$. 
\end{enumerate}   Suppose further that we have defined $\mathcal{P}_0\supset\mathcal{P}_1\supset\dots\supset\mathcal{P}_r\supset\mathcal{P}_{r+1}$ such that for each $i=0,1,\dots,r$, 
$$ 
\mathcal{P}_{i+1}=\{p\in\mathcal{P}_i\colon z_i\text{ is not a subword of }p\} 
$$ 
and the word $\dots z_iz_iz_iz_iz_i\dots$ is an element of $\mathcal{P}_i$ of minimum possible period, among all elements of $\mathcal{P}_i$.  Among the elements of $\mathcal{P}_{r+1}$ choose a periodic point $p_{r+1}$ that has the smallest possible minimal period.  Find a word $v_{r+1}\in\mathcal{L}(\mathcal{X}_1)$ such that $v_{r+1}$ has $11$ as a prefix and such that $p_{r+1}=\dots v_{r+1}v_{r+1}v_{r+1}v_{r+1}v_{r+1}\dots$ is (a shift of) the bi-infinite self-concatenation of $v_{r+1}$.  Choose an integer $k_{r+1}>0$ sufficiently large such  that $v_{r+1}^{k_{r+1}}$ does not occur as a subword of $y$ and is also such that $|v_{r+1}|\cdot k_{r+1}>|z_r|$.  Define $z_{r+1}:=v_{r+1}^{100k_{r+1}}11$.  Since $v_{r+1}$ begins with prefix $11$, the word $z_{r+1}$ is periodic and has $11$ as both a prefix and a suffix.  Define 
$$ 
\mathcal{P}_{r+2}:=\{p\in\mathcal{P}_{r+1}\colon z_{r+1}\text{ is not a subword of }p\}. 
$$ 
Inductively, this defines $\mathcal{P}_i$ for all $i\geq0$.  We define 
\begin{equation}\label{eq:words_two_one-one}
\mathcal{W}_2^{11}:=\{z_i\colon i\in\N\} 
\end{equation} 

\subsubsection{The set of Type II words in $\mathcal{W}_2^{00}$}\label{sec:zerozero}
Consider the shift 
$$
\mathcal{X}_2=\{x\in\mathcal{X}_1\colon\sigma^i(x)\notin[z]\text{ for all }i\in\Z\text{ and }z\in\mathcal{W}_2^{11}\}. 
$$
Let $\mathcal{H}_2$ be the subshift of finite type whose only forbidden word is $11$.  Note that $\mathcal{H}_2\subset\mathcal{H}_1$ and that $\mathcal{H}_2\subseteq\mathcal{X}_2$ because every word in $\mathcal{W}_2^{11}$ contains $11$ as a subword.  Consequently, 
$$ 
h_{\tp}(\mathcal{X}_2)\geq h_{\tp}(\mathcal{H}_2)>\log_2(1.618). 
$$ 
Hence the topological entropy of $\mathcal{X}_2$ is positive.  
Moreover, $\mathcal{X}_2$ is topologically mixing: if $w_1,w_2\in\mathcal{L}(\mathcal{X}_2)$, then $^{\infty}0w_10^kw_20^{\infty}\in\mathcal{X}_2$ as long as $k\geq\max\{|w_1|,|w_2|,3\}$ 
(analogous to the argument for mixing on words in $\mathcal W_2^{11}$,  
the shift $\mathcal X_2$ contains no  element of $\mathcal{W}_1$ and it contains no element of $\mathcal{W}_2^{11}$ as a subword because all such words begin and end with $11$ and contain at least $100$ copies of their period).

We  inductively construct the set $\mathcal{W}_2^{00}$.  Our goal is to forbid additional words from the language of $\mathcal{X}_2$ that  eliminate all periodic points that contain $00$ but do not contain $000$ as a subword.

Let $\mathcal{Q}_0\subseteq\mathcal{X}_2$ be the collection of all periodic points that contain $00$ as a subword but do not contain $000$ as a subword.  These periodic points do not contain $11$ as a subword because $\mathcal{X}_2\subseteq\mathcal{X}_1$ was constructed to remove all periodic points from $\mathcal{X}_1$ that contain $11$ as a subword.  We  proceed as in the construction for words in $\mathcal W_2^{11}$.  
Among the elements of $\mathcal{Q}_0$, choose a periodic point $q_0$ that has the least  possible minimal period.  Find a word $w_0\in\mathcal{L}(\mathcal{X}_2)$ such that $w_0$ has $00$ as a prefix and such that $q_0=\dots w_0w_0w_0w_0w_0\dots$ is (a shift of) the bi-infinite concatenation of $w_0$.  Choose an integer $\ell_0>0$ sufficiently large such that $w_0^{\ell_0}$ does not occur as a subword of $y$.  Define the word $\alpha_0:=w_0^{100\ell_0}00$.  Since $w_0$ begins with prefix $00$, the word $\alpha_0$ is periodic and has $00$ as both a prefix and a suffix.  Define 
$$ 
\mathcal{Q}_1:=\{q\in\mathcal{Q}_0\colon\alpha_0\text{ is not a subword of }q\}. 
$$ 
We continue this procedure inductively.  
Assume we have constructed words $\alpha_0,\dots,\alpha_r$ such that 
\begin{enumerate}
    \item 
each $\alpha_i$ is periodic; 
the lengths of the $\alpha_i$ are increasing; 
\item each $\alpha_i$ has $00$ as both a prefix and a suffix; 
\item there is a word $w_i$ and an integer $\ell_i$ such that $\alpha_i=w_i^{100\ell_i}00$ and $w_i^{\ell_i}$ is not a subword of $y$. 
\end{enumerate}
We further suppose  that we have defined $\mathcal{Q}_0\supset\mathcal{Q}_1\supset\dots\supset\mathcal{Q}_r\supset\mathcal{Q}_{r+1}$ such that for each $i=0,1,\dots,r$, 
$$ 
\mathcal{Q}_{i+1}=\{q\in\mathcal{Q}_i\colon \alpha_i\text{ is not a subword of }q\} 
$$ 
and the word $\dots \alpha_i\alpha_i\alpha_i\alpha_i\alpha_i\dots$ is an element of $\mathcal{Q}_i$ of minimum possible period, among all elements of $\mathcal{Q}_i$.  
Among the elements of $\mathcal{Q}_{r+1}$, we  choose a periodic point $q_{r+1}$ that has the smallest possible minimal period.  Find a word $w_{r+1}\in\mathcal{L}(\mathcal{X}_2)$ such that $w_{r+1}$ has $00$ as a prefix and such that $q_{r+1}=\dots w_{r+1}w_{r+1}w_{r+1}w_{r+1}w_{r+1}\dots$ is (a shift of) the bi-infinite self-concatenation of $w_{r+1}$.  Choose an integer $\ell_{r+1}>0$ sufficiently large that $w_{r+1}^{\ell_{r+1}}$ does not occur as a subword of $y$ and is also such that $|w_{r+1}|\cdot \ell_{r+1}>|\alpha_r|$.  Define $\alpha_{r+1}:=w_{r+1}^{100\ell_{r+1}}00$.  Since $w_{r+1}$ begins with prefix $00$, the word $\alpha_{r+1}$ is periodic and has $00$ as both a prefix and a suffix.  Define 
$$ 
\mathcal{Q}_{r+2}:=\{q\in\mathcal{Q}_{r+1}\colon \alpha_{r+1}\text{ is not a subword of }q\}. 
$$ 
Inductively, this defines $\mathcal{Q}_i$ for all $i\geq0$.  We define 
\begin{equation}\label{eq:words_two_zero-zero-zero}
\mathcal{W}_2^{00}:=\{\alpha_i\colon i\in\N\}. 
\end{equation} 

\subsubsection{The set of Type II words in $\mathcal{W}_2^{000}$}\label{sec:zerozerozero}
Consider the shift 
$$
\mathcal{X}_3=\{x\in\mathcal{X}_2\colon\sigma^i(x)\notin[\alpha]\text{ for all }i\in\Z\text{ and }\alpha\in\mathcal{W}_2^{00}\}. 
$$
Let $\mathcal{H}_3$ be the subshift of finite type whose only forbidden words are $11$ and $1001$.  Note that if $h\in\mathcal{H}_3$ then $h$ contains no  word in $\mathcal{W}_1\cup\mathcal{W}_2^{11}$ as a subword (since all such words contain $11$ as a subword and $h$ does not) and also  contains no word in $\mathcal{W}_2^{00}$ as a subword (since all such words contain $1001$ as a subword and $h$ does not).  Therefore $\mathcal{H}_3\subseteq\mathcal{X}_3$ and  
$$ 
h_{\tp}(\mathcal{X}_3)\geq h_{\tp}(\mathcal{H}_3)>\log_2(1.512).
$$ 
In particular, the topological entropy of $\mathcal{X}_3$ is positive.  Moreover $\mathcal{X}_3$ is topologically mixing: for any $w_1,w_2\in\mathcal{L}(\mathcal{X}_3)$ then $^{\infty}0w_10^kw_20^{\infty}\in\mathcal{X}_2$ so long as $k\geq\max\{|w_1|,|w_2|,3\}$, and it is also easy to  check that $\mathcal{H}_3$ is also topologically mixing.

Since $\mathcal{X}_3$ contains a positive entropy subshift of finite type, it has infinitely many periodic points.  By construction of the sets $\mathcal{W}_2^{11}$ and $\mathcal{W}_2^{00}$, none of these periodic points contains $11$ as a subword and the only periodic point that does not contain $000$ as a subword is the point $(01)^\infty$. 
As before, we  inductively construct the set $\mathcal{W}_2^{000}$ with the goal of eliminating all of the remaining periodic points.  However, the words in $\mathcal{W}_2^{000}$  have to be chosen more delicately than before to ensure that we obtain a nonempty period stable but not language stable shift.  Setting  $\mathcal{W}_2^{000}=\{\beta_0,\beta_1,\dots\}$,  our goal is to construct the words in $\mathcal{W}_2^{000}$ such that for any fixed $n$, the shift 
$$ 
\mathcal{X}_3(n):=\{x\in\mathcal{X}_3\colon\sigma^{i}(x)\notin[\beta_j]\text{ for all }i\in\Z\text{ and }0\leq j\leq n\} 
$$ 
has entropy bounded below by $\frac{1}{2}h_{\tp}(\mathcal{H}_3)$, and 
such that the words are chosen such that the shift
$$ 
\mathcal{X}_4:=\bigcap_{n=0}^{\infty}\mathcal{X}_3(n) 
$$ 
is period stable but not language stable.  Largely this is  accomplished by defining the words $\beta_i$ to be long periodic words, with carefully chosen lengths, that eliminate the remaining periodic points. 

Again we define words $\beta_0,\beta_1,\dots$ inductively, and for each $t\geq 0$ we then set 
$$ 
\mathcal{H}_3(t)=\{x\in\mathcal{H}_3\colon\sigma^i(x)\notin[\beta_j]\text{ for all }i\in\Z\text{ and }0\leq j\leq t\}. 
$$
Note that 
$$ 
h_{\tp}(\mathcal{X}(t))\geq h_{\tp}(\mathcal{H}(t))\quad \text{ for all }t\geq 0.
$$
Let $\mathcal{R}_0$ be the set of all periodic points in $\mathcal{X}_3$.  Define 
$$
\beta_0=(01)^{100k}=01010101\dots0101
$$
where $k$ is sufficiently large such that
    \begin{enumerate}
    \item the word $(01)^k$ does not occur as a subword of $y$; 
    \item $h_{\tp}(\mathcal{H}_3(0))>(1-1/4)\cdot h_{\tp}(\mathcal{H}_3)$ (by Lemma~\ref{lem:lind}, this is possible).
    \end{enumerate}
Let $\mathcal{R}_1$ be the set of all periodic points in $\mathcal{X}_3(0)$. 
Note that none of the points in $\mathcal{R}_1$ contain either $11$ or $1001$ as a subword, and all of them contain $000$ as a subword (the only periodic point in $\mathcal{X}_3$ that does not contain $000$ as a subword is the point $\dots0101010101\dots$ which is forbidden by the word $\beta_0$ in $\mathcal{X}_3(0)$).  
Among the elements of $\mathcal{R}_1$, choose a periodic point $r_1$  that has the smallest possible minimal period.  Choose a word $x_1$ that has $000$ as a prefix and is such that $r_1=\dots x_1x_1x_1x_1x_1\dots$ is (a shift of) the bi-infinite concatenation of $r_1$.  Choosing an integer $s_1$ sufficiently large such that when we take $\beta_1:=x_1^{100s_1}$, we  then  have that 
    \begin{enumerate}
    \item $x_1^{s_1}$ does not occur as a subword of $y$; 
    \item $h_{\tp}(\mathcal{H}_3(1))>(1-1/4-1/16)\cdot h_{\tp}(\mathcal{H}_3)$ (by Lemma~\ref{lem:lind}, this is possible). 
    \end{enumerate} 
Let $\mathcal{R}_2$ be the set of all periodic points in $\mathcal{X}_3(1)$.  We continue inductively and assume that we have  defined words $\beta_0,\dots,\beta_t$ that are each periodic, have increasing lengths, are such that $\beta_i=x_i^{100s_i}$ for some word $x_i$ that has $000$ as a prefix and $s_i$ large enough that $x_i^{s_i}$ does not occur as a subword of $y$ (for all $1\leq i\leq t$), and are such that for all $i=1,\dots,t$ we have 
    $$
    h_{\tp}(\mathcal{H}_3(i))>\left(1-\sum_{j=0}^i\frac{1}{4^{j+1}}\right)\cdot h_{\tp}(\mathcal{H}_3). 
    $$
Let $\mathcal{R}_{t+1}$ be the set of periodic points in $\mathcal{X}_3(t)$.  Note that $\mathcal{R}_{t+1}$ is nonempty because $\mathcal{X}_3(t)$ contains the positive entropy subshift of finite type $\mathcal{H}_3(t)$.  Among the points in $\mathcal{R}_{t+1}$, let $r_{t+1}$ be a periodic point with the smallest possible minimal period.  Find a word $x_{t+1}$ that has $000$ as a prefix and is such that $r_{t+1}=\dots x_{t+1}x_{t+1}x_{t+1}x_{t+1}x_{t+1}\dots$ is (a shift of) the bi-infinite concatenation of $r_{t+1}$.  Choose  an integer $s_{t+1}$ sufficiently large such that if $\beta_{t+1}:=x_{t+1}^{100s_{t+1}}$, then $\beta_{t+1}$ is longer than $\beta_t$ and 
    \begin{enumerate}
    \item $x_{t+1}^{s_{t+1}}$ does not occur as a subword of $y$;
    \item $h_{\tp}(\mathcal{H}_3(t+1))>\left(1-\sum_{j=0}^{t+1}\frac{1}{4^{j+1}}\right)\cdot h_{\tp}(\mathcal{H}_3)$ (by Lemma~\ref{lem:lind}, this is possible).  
    \end{enumerate}
We further require that  $s_{t+1}$ has an additional property that allows us to show  that the shift we build  is period stable.  Let $(\mathcal{Y}_n(t))_{n\in\N}$ be the SFT cover of $\mathcal{X}_3(t)$.  For any fixed $n\in\N$, the shift $\mathcal{Y}_n(t)$ contains many periodic points that are not in $\mathcal{X}_3(t)$.  However, for any periodic point $p$, either $p\in\mathcal{R}_{t+1}$ or there exists $n\in\N$ such that $p\notin\mathcal{Y}_n(t)$.  Since $r_{t+1}$ has the smallest possible minimal period among all periodic points in $\mathcal{R}_{t+1}$, there exists $N_{t+1}$ such that for all $n\geq N_{t+1}$, 
all periodic points in $\mathcal{Y}_n(t)$ that do not lie in $\mathcal{R}_{t+1}$ have minimal period strictly larger than the minimal period of $r_{t+1}$.  When choosing the integer $s_{t+1}$, we require  that the length of $\beta_{t+1}=x_{t+1}^{100s_{t+1}}$ is longer than $N_{t+1}+t+1$.  Define $\gamma_{t+1}$ to be $\beta_{t+1}$ with its first letter removed, and $\zeta_{t+1}$ to be $\beta_{t+1}$ with its last letter removed.  If $(\mathcal{Y}_n(t+1))_{n\in\N}$ is the SFT cover of $\mathcal{X}_3(t+1)$, we claim that the periodic points of minimal possible period in $\mathcal{Y}_{N_{t+1}}(t+1)$ coincide with those of minimal possible period in $\mathcal{Y}_{N_{t+1}+t}(t+1)$.  To see this, note that any periodic point in $\mathcal{Y}_{N_{t+1}}(t+1)$ of minimal possible period that does not contain $\beta_{t+1}$ as a subword, is in $\mathcal{Y}_{N_{t+1}+t}(t+1)$.  If we can show that both $\gamma_{t+1}$ and $\zeta_{t+1}$ are in $\mathcal{L}(\mathcal{Y}_{N_{t+1}+t}(t+1))$, then any periodic point $\mathcal{Y}_{N_t}(t+1)$ that does contain $\beta_{t+1}$ as a subword is also in $\mathcal{Y}_{N_{t+1}+t}(t+1)$ because any forbidden word that occurs it must contain $\beta_{t+1}$ as a subword, but $|\beta_{t+1}|>N_{t+1}+t$.  To see that both $\gamma_{t+1}$ and $\zeta_{t+1}$ are in $\mathcal{L}(\mathcal{Y}_{N_{t+1}+t}(t+1))$, let $A$ be a semi-infinite ray in $y$ that emanates to the left and ends with a $1$, and let $\Omega$ be a semi-infinite ray in $y$ that emanates to the right and starts with a $y$ (where $y\in Y$ is the element of the Sturmian shift $Y$ we fixed at the start of the construction).  Notice that $A\gamma_{t+1}\Omega\in\mathcal{Y}_{N_{t+1}+t}(t+1)$ (and similarly $A\gamma_{t+1}\Omega\in\mathcal{Y}_{N_{t+1}+t}(t+1)$) because it does not contain $11$ as a subword, it contains $1001$ as a subword at most twice (overlapping $A$ and $\gamma_{t+1}$ or overlapping $\gamma_{t+1}$ and $\Omega$), and all occurrences of $000$ as a subword occur within $\gamma_{t+1}$, so it does not contain any of the already forbidden words as a subword.
Finally, we require $s_{t+1}$  be sufficiently large such  that $|\beta_{t+1}|>2|\beta_t|$.

Inductively this procedure defines the words $\beta_t$ for all $t\geq 1$.  We define 
\begin{equation}
    \label{def:X4}
\mathcal{X}_4:=\bigcap_{t=1}^{\infty}\mathcal{X}_3(t)
\end{equation} 
and claim that $\mathcal{X}_4$ is period stable, is not language stable, and does not have any periodic points.

\subsubsection{Properties of $\mathcal{X}_4$} 
We check the properties of the shift $\mathcal{X}_4$ defined in~\eqref{def:X4}, showing that it is a nonempty, period stable but not language stable shift that has no periodic points.  It then follows from period stability that  $\mathcal{X}_4$ 
has a characteristic measure, and the existence of this measure does not follow either from other criteria, such as language stability or existence of a periodic point. Moreover, as $\mathcal X_4$ has no periodic point, the characteristic measure we produce is non-atomic. 

\begin{lemma}\label{lem:nonemtpy}
$\mathcal{X}_4$ is nonempty.
\end{lemma}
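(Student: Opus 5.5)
Since $\mathcal{X}_4=\bigcap_{t}\mathcal{X}_3(t)$ is a decreasing intersection of closed shift-invariant sets, compactness reduces the claim to showing each $\mathcal{X}_3(t)$ is nonempty. Each $\mathcal{X}_3(t)$ is nonempty because it contains $\mathcal{H}_3(t)$, and by construction
$$h_{\tp}(\mathcal{H}_3(t))>\Bigl(1-\sum_{j=0}^{t}\tfrac{1}{4^{j+1}}\Bigr)h_{\tp}(\mathcal{H}_3)>\tfrac{2}{3}h_{\tp}(\mathcal{H}_3)>0.$$
The nested nonempty compact sets $\mathcal{X}_3(0)\supseteq\mathcal{X}_3(1)\supseteq\cdots$ therefore have nonempty intersection.

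I would also give a more robust argument that does not depend on the entropy bookkeeping for $\mathcal{H}_3(t)$: exhibit an explicit point in $\mathcal{X}_4$, namely the fixed Sturmian point $y$ itself (or any point of $Y$). I would check directly that $y$ avoids every forbidden word.
\begin{itemize}
\item $y$ contains no $11$, and every word in $\mathcal{W}_1$ and in $\mathcal{W}_2^{11}$ contains $11$.
\item $y$ contains no $000$. Each $\beta_t=x_t^{100s_t}$ with $t\geq1$ has $000$ as a prefix, so it does not occur in $y$.
\item $\beta_0=(01)^{100k}$ contains $(01)^k$, which was chosen not to occur in $y$.
\item Each $\alpha_i=w_i^{100\ell_i}00$ contains $w_i^{\ell_i}$, which was chosen not to occur in $y$.
\end{itemize}
Since $y$ avoids every word of $\mathcal{F}$, we get $y\in\mathcal{X}_4$.

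The main point to be careful about is that the forbidden words were defined relative to the intermediate shifts $\mathcal{X}_1,\mathcal{X}_2,\mathcal{X}_3$. Membership of $y$ in $\mathcal{X}_4$, however, only requires that $y$ avoid every word of $\mathcal{F}=\mathcal{W}_1\cup\mathcal{W}_2^{11}\cup\mathcal{W}_2^{00}\cup\mathcal{W}_2^{000}$, and the four checks above cover all of them. I therefore expect no real obstacle. The only step needing attention is confirming that each Type II word contains the corresponding non-occurring power ($v_i^{k_i}$, $w_i^{\ell_i}$, $x_i^{s_i}$, $(01)^k$), and this is immediate from the exponents $100k_i$ and so on.
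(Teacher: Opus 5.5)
Both of your arguments are correct.

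Your first argument is essentially the paper's. The paper also starts from $h_{\tp}(\mathcal X_3(t))\ge h_{\tp}(\mathcal H_3(t))>\frac23 h_{\tp}(\mathcal H_3)$. Instead of the nested-compact-sets property, it applies upper semicontinuity of entropy along the decreasing sequence. This gives $h_{\tp}(\mathcal X_4)\ge \frac23\log_2(1.512)>0$, and nonemptiness follows.

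Your second argument is a genuinely different route. Exhibiting $y$ (indeed all of the minimal Sturmian shift $Y$) inside $\mathcal X_4$ uses only the clauses of the construction saying that $11$, $000$, $(01)^k$ and $w_i^{\ell_i}$ do not occur in $y$. These are exactly the checks the paper itself makes in Claims 1--4 of Lemma~\ref{lem:not-language-stable} for the points $La_nR$. The advantage is that it bypasses Lind's lemma and the entropy bookkeeping for $\mathcal H_3(t)$ entirely.

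The trade-off is that it only places a zero-entropy subshift inside $\mathcal X_4$. The paper's entropy argument yields $h_{\tp}(\mathcal X_4)>0$. That is worth having for the purpose of the example: Frisch and Tamuz already guarantee characteristic measures for zero-entropy subshifts. So positive entropy is part of what keeps $\mathcal X_4$ from being covered by an existing criterion, rather than needing Theorem~\ref{th:periodic}.
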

\begin{proof}
It follows from the definition~\eqref{def:X4} of $\mathcal X_4$ that this shift is the intersection of the nested sequence of shifts $\mathcal{X}_3(0)\supset\mathcal{X}_3(1)\supset\mathcal{X}_3(2)\supset\dots$.  Furthermore, it follows from the construction that 
$$ 
h_\tp(\mathcal{X}_3(t))\geq h_\tp(\mathcal{H}_3(t))>\left(1-\sum_{j=0}^t\frac{1}{4^{j+1}}\right)\cdot h_{\tp}(\mathcal{H}_3)>\frac{2}{3}\cdot\log_2(1.512).
$$
By upper semi-continuity of entropy, this means $h_{\tp}(\mathcal{X}_4)\geq\frac{2}{3}\cdot\log_2(1.512)>0$, and in particular  $\mathcal{X}_4$ is nonempty.
\end{proof}

\begin{lemma}\label{lem:no-periodic}
$\mathcal{X}_4$ has no periodic points.
\end{lemma}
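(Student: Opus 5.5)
Suppose, for contradiction, that $p\in\mathcal{X}_4$ is periodic. We sort $p$ into one of three cases according to which short patterns it contains, and in each case invoke the Type II construction that was built to remove exactly such points. Since $\mathcal{X}_4\subseteq\mathcal{X}_3\subseteq\mathcal{X}_2\subseteq\mathcal{X}_1$, the point $p$ is a periodic point of each of these shifts. Exactly one of the following holds: $p$ contains $11$; $p$ contains no $11$ but contains $00$ and no $000$; or $p$ contains no $11$ and either contains $000$ or contains neither $00$ nor $11$. In the last sub-case $p=(01)^\infty$ up to shift.

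The three cases use one argument, applied first to the inductive construction of $\mathcal{W}_2^{11}$. If $p$ contains $11$, then $p\in\mathcal{P}_0$. The key fact is that the minimal periods of the chosen points $p_0,p_1,\dots$ are nondecreasing, since each $\mathcal{P}_{i+1}\subseteq\mathcal{P}_i$. They also tend to infinity: the shift $\mathcal{X}_1$ has only finitely many periodic points of each period, and each step removes at least the chosen point $p_i$, because $z_i=v_i^{100k_i}11$ is a subword of $p_i$. Choose $i$ with the minimal period of $p_i$ exceeding that of $p$. Then $p\notin\mathcal{P}_i$, because $p_i$ was chosen of least period in $\mathcal{P}_i$. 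So there is a $j<i$ with $p\in\mathcal{P}_j\setminus\mathcal{P}_{j+1}$, meaning $z_j$ is a subword of $p$. But $z_j\in\mathcal{F}$ is forbidden in $\mathcal{X}_4$, a contradiction.

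The second case repeats this argument with $\mathcal{Q}_i$ and $\alpha_i$, working inside $\mathcal{X}_2$; the point $p$ lies in $\mathcal{Q}_0$ because it avoids $11$, contains $00$, and avoids $000$. In the third case, $p=(01)^\infty$ contains $\beta_0=(01)^{100k}$, which is forbidden. Otherwise $p$ is a periodic point of $\mathcal{X}_3(0)$, so $p\in\mathcal{R}_1$. The sets $\mathcal{R}_t$ are nested, each $r_t$ contains its own forbidding word $\beta_t=x_t^{100s_t}$, and $r_t$ has least period in $\mathcal{R}_t$. The same period-counting argument therefore produces a $t$ with $\beta_t$ a subword of $p$, again a contradiction.

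The main obstacle is the step "each stage removes the chosen point and periods go to infinity". It requires checking that the forbidden word really occurs in the chosen point; this holds because $z_i$, $\alpha_i$ and $\beta_t$ are powers of a period word of $p_i$, $q_i$ and $r_t$ respectively, with the trailing $11$ or $00$ matching the prefix of the period word. It also requires the finiteness of periodic points of a given period in a subshift over $\{0,1\}$, which is clear. A subtle point concerns the case split. A point containing $00$ but not $000$ may contain $11$; such a point falls under the first case, since the cases are tested in order. Also, $\mathcal{X}_3$ is by definition contained in $\mathcal{X}_2$ and $\mathcal{X}_1$, so the earlier stages apply to $p$ without any further argument.
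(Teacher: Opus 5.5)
Your proposal is correct and takes the same approach as the paper, which simply declares the lemma immediate from the construction of the Type II words. Your period-counting argument supplies the detail the paper leaves implicit: the chosen points $p_i$, $q_i$, $r_t$ are distinct and each least period is attained by only finitely many binary points, so these periods tend to infinity and any periodic point must contain one of the forbidden words $z_j$, $\alpha_j$, $\beta_j$. As a minor remark, $\mathcal{X}_4\subseteq\mathcal{X}_3(0)$, so every periodic point of $\mathcal{X}_4$ already lies in $\mathcal{R}_1$ and your third-stage argument alone proves the lemma; your first two cases are needed only to justify the construction's claim that each $r_t$ contains $000$.
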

\begin{proof}
This is immediate from the construction of the sets $\mathcal{W}_1$, $\mathcal{W}_2^{11}$, $\mathcal{W}_2^{00}$, and $\mathcal{W}_2^{000}$ which were built specifically to add forbidden words that eliminated all periodic points.
\end{proof} 

To show that $\mathcal{X}_4$ is not language stable, we introduce some notation.  
For any $m\in\Z$,  let $L_m\in\{0,1\}^{\{n\in\Z\colon n\leq m\}}$ be the $\{0,1\}$-valued coloring of the left-infinite ray $\{n\in\Z\colon n\leq m\}$ given by 
$$
L_m(n):=y(1+n-m).
$$
In other words, $L_m$ is the left-infinite ray obtained by restricting $y$ to the set $\{n\in\Z\colon n\leq1\}$ and then shifting the ray such  that its rightmost edge is at $m$ instead of $1$.  Similarly,  let $R_m\in\{0,1\}^{\{n\in\Z\colon n\geq m\}}$ be the $\{0,1\}$-valued coloring of the right-infinite ray $\{n\in\Z\colon n\geq m\}$ given by 
$$
R_m(n):=y(1+n-m)
$$
We have that $L_m(m)=R_m(m)=0$, because $y(1)=0$.

For $w\in\{0,1\}^*$, we  index the letters of $w$ starting at $1$, meaning we consider $w$ to be a function $w\colon\{1,2,\dots,|w|\}\to\{0,1\}$.  Set $LwR\colon\Z\to\{0,1\}$ to be the function 
$$ 
LwR(i)=\left\{\begin{tabular}{cl} 
$L_0(i)$ & if $i\leq0$; \\ 
$w(i)$ & if $1\leq i\leq|w|$; \\ 
$R_{|w|+1}(i)$ & if $|w|+1\leq i$. 
\end{tabular}\right. 
$$ 
In other words, $LwR$ is the $\{0,1\}$-valued coloring of $\Z$ obtained by concatenating the left-infinite ray $L_0$ with $w$ and $R_{|w|+1}$.  By construction, $LwR(0)=LwR(|w|+1)=0$ and the restriction of $LwR$ to the set $\{1,2,\dots,|w|\}$ is $w$.  For clarity, we introduce terminology to reflect how a word occurs as a subword of a concaentation of words. 
If $u$ is a subword of $LwR$ that occurs at coordinates entirely within the set $\{i\colon i\leq0\}$, we say $u$ {\em occurs in $L$}.  If $u$ occurs at coordinates entirely in the set $\{i\colon i\geq|w|+1\}$, we  say $u$ {\em occurs in $R$}.  If $u$ occurs at coordinates entirely in the set $\{1,2,\dots,|w|\}$, we  say $u$ {\em occurs in $w$}.

\begin{lemma}\label{lem:not-language-stable}
$\mathcal{X}_4$ is not language stable. 
\end{lemma}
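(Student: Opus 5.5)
The plan is to exhibit, for every $n\in\N$, a minimal forbidden word of $\mathcal{X}_4$ of length $4n+6$, namely $u_n$. I will show it actually separates consecutive terms of the SFT cover. Write $(\mathcal{X}_{4,k})_{k\in\N}$ for the SFT cover of $\mathcal{X}_4$. The target is
$$
\mathcal{X}_{4,4n+5}\neq\mathcal{X}_{4,4n+6}\quad\text{for every }n\in\N.
$$
Since these indices $4n+6$ are spaced four apart, no run $\mathcal{X}_{4,k}=\mathcal{X}_{4,k+4}$ can occur once $k\geq 9$, so no run $\mathcal{X}_{4,k}=\mathcal{X}_{4,k+m}$ with $m\geq 4$ and $k\geq 9$ exists. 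Language stability requires, for every $m$, such runs for some $k$; since the SFT cover is decreasing, this forces runs with arbitrarily large $k$, which rules it out.

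To get the strict inequality, let $\gamma_n$ be $u_n$ with its first letter removed and $\zeta_n$ be $u_n$ with its last letter removed. The key claim is that $L\gamma_n R$ and $L\zeta_n R$ both lie in $\mathcal{X}_4$, where the rays $L$ and $R$ (copies of halves of $y$ meeting at $0$) are defined before the lemma. Granting this, take the point $Lu_nR$. Any subword of length at most $4n+5$ of $Lu_nR$ misses either the first or the last letter of the central $u_n$. It is therefore a subword of $L\gamma_nR$ or of $L\zeta_nR$, up to position, since the rays on either side are unchanged. Hence $Lu_nR\in\mathcal{X}_{4,4n+5}$. But $u_n\in\mathcal{W}_1$ is forbidden, so $Lu_nR\notin\mathcal{X}_{4,4n+6}$.

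The main work is checking that $L\gamma_nR$ (and symmetrically $L\zeta_nR$) contains no word of $\mathcal{F}$. Since $Y$ forbids $11$ and $000$, and $L$ ends and $R$ starts with $y(1)=0$, the point $L\gamma_nR$ has at most three occurrences of $11$, all inside the two $1$-blocks of $\gamma_n$. It has exactly one occurrence of $111$, the terminal block, because the initial block has been shortened to $11$. It contains no $000$ at all: the junctions look like $0\,11$ and $111\,0$, and everything else consists of subwords of $y$. The cases are then as follows.
\begin{enumerate}
\item Type I: each $u_j$ contains two disjoint occurrences of $111$, but $L\gamma_nR$ has only one.
\item $\mathcal{W}_2^{11}$: each $z_i=v_i^{100k_i}11$ contains at least $100$ occurrences of $11$, since $v_i$ begins with $11$. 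This is more than the at most three available.
\item $\mathcal{W}_2^{000}$: each $\beta_i$ contains $000$, because $x_i$ begins with $000$.
\item $\mathcal{W}_2^{00}$: here I use that $\alpha_i$ contains no $11$, since its period came from a periodic point of $\mathcal{X}_2$ free of $11$. Then any occurrence of $\alpha_i$ avoids the $1$-blocks of $\gamma_n$, so it lies entirely in $L$, in the Sturmian middle $y(1)\cdots y(4n)$, or in $R$. Each of these is a subword of $y$, which would make $w_i^{\ell_i}$ a subword of $y$. This contradicts the choice of $\ell_i$.
\end{enumerate}

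I expect the main obstacle to be the bookkeeping in this last verification, especially the precise counting of occurrences of $11$ and $111$ near the junctions. This is sensitive to whether $u_n(4n+3)=1$, in which case the suffix is $1111$. In that case there are three occurrences of $11$ and two overlapping occurrences of $111$ at the right end. So the Type I argument must use that $u_j$ has two occurrences of $111$ separated by a $0$, rather than just counting occurrences. The case of $\zeta_n$ is similar, with the roles of the ends swapped. The $\mathcal{W}_2^{11}$ count still works because $100\gg 3$.
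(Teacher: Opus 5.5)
Your overall route is the same as the paper's. You use $u_n$ as the separating word, place its truncations $\gamma_n=b_n$ and $\zeta_n=a_n$ between the rays $L$ and $R$, and check each family in $\mathcal{F}$. Several parts are fine: the deduction of $\mathcal{X}_{4,4n+5}\neq\mathcal{X}_{4,4n+6}$ from $L\gamma_nR,\,L\zeta_nR\in\mathcal{X}_4$, the passage to non-language-stability, and your $\mathcal{W}_2^{11}$ and $\mathcal{W}_2^{00}$ cases. (There can be four occurrences of $11$, not three, which is harmless.) However, two steps of the verification fail as stated.

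\textbf{First gap: $\beta_0$.} The word $\beta_0=(01)^{100k}\in\mathcal{W}_2^{000}$ is not of the form $x_i^{100s_i}$ and contains no $000$, so your case (3) misses it; the paper treats it separately. It can be handled like your $\mathcal{W}_2^{00}$ case. Since $\beta_0$ has no $11$ and begins with $0$, an occurrence can meet a $1$-run of length $\geq 2$ only in its last letter. Deleting that letter leaves $(01)^{100k-1}0$ inside $L$, the Sturmian middle, or $R$, which puts $(01)^k$ into $y$, a contradiction.

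\textbf{Second gap: Type I for $\zeta_n$.} This case is not the mirror image of the case for $\gamma_n$, because $y(1)=0$ is fixed but $y(4n)$ is not. Suppose $y(4n)=1$; nothing in the construction excludes this. Since $y(4n-1)=0$, we get $\zeta_n=111\,y(1)\cdots y(4n-1)\,111$. So $L\zeta_nR$ contains $111$ in two distinct maximal $1$-runs separated by $0$s. That is exactly the pattern you use to recognize $u_j$, so your criterion excludes nothing here.

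What does exclude $u_j$ is the length restriction $|u_j|=4j+6$, which your argument never uses. The word $u_j$ begins with $1110$, and its final $1$-run is preceded by a $0$. The only $1$-runs of length $\geq 3$ in $L\zeta_nR$ are the two runs of length exactly $3$ at the ends of $\zeta_n$. Hence an occurrence of $u_j$ would have to be $\zeta_n$ itself, whose length $4n+5$ is odd. The paper's Claim 1 does this via lengths: $u_j$ must lie inside $a_n$, which forces $|u_j|\leq |a_n|-3$, while the positions of $11$ in $a_n$ force $|u_j|\geq |a_n|-2$.

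This is not mere bookkeeping; it is exactly where the restriction of the middle lengths to multiples of $4$ matters. With arbitrary middle lengths, take $N$ with $y(N)=1$. The right truncation of $111\,y(1)\cdots y(N)\,111$ would then equal $111\,y(1)\cdots y(N-1)\,111$, which is itself a forbidden word.
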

As a remark, this is the point at which we use the words defined in $\mathcal{W}_1$.
\begin{proof} 
Letting $(\mathcal{S}_n)_{n\in\N}$ be the SFT cover of $\mathcal{X}_4$, we show that the set 
$$ 
\{n\in\N\colon\mathcal{S}_n\neq\mathcal{S}_{n+1}\} 
$$ 
is syndetic in $\N$.  

Fix $n\in\N$.  The word $u_n\in\mathcal{W}_1$ is forbidden in $\mathcal{X}_4$.  Let $a_n$ be the word $u_n$ with its rightmost letter removed (thus  $|a_n|=|u_n|-1$).  Let $b_n$ be the word $u_n$ with its leftmost letter removed (again $|b_n|=|u_n|-1$).  We  show that $a_n,b_n\in\mathcal{L}(\mathcal{X}_4)$.  It follows from this that $u_n\in\mathcal{L}(\mathcal{S}_{|a_n|})=\mathcal{L}(\mathcal{S}_{|u_n|-1})$ but $u_n\notin\mathcal{L}(\mathcal{S}_{|u_n|}$); hence $\mathcal{S}_{|u_n|-1}\neq\mathcal{S}_{|u_n|}$.  Since $\{|u_n|\colon n\in\N\}$ is a syndetic subset of $\N$, it follows that $\mathcal{X}_4$ is not language stable.  
Thus we are left with showing that  $a_n,b_n\in\mathcal{L}(\mathcal{X}_4)$.

To show that $a_n\in\mathcal{L}(\mathcal{X}_4)$, we prove  that $La_nR\in\mathcal{X}_4$ (a similar argument shows that $Lb_nR\in\mathcal{X}_4$ and so we also have that $b_n\in\mathcal{L}(\mathcal{X}_4)$).  To check that $La_nR\in\mathcal{X}_4$, we need to check that no element of $\mathcal{W}_1\cup\mathcal{W}_2^{11}\cup\mathcal{W}_2^{000}\cup\mathcal{W}_2^{00}$ occurs as a subword of $La_nR$.

\subsubsection*{Claim 1: If $w\in\mathcal{W}_1$, then $w$ does not occur as a subword of $La_nR$.
} 
 For contradiction, suppose $w$ occurs as a subword of $La_nR$.  Since $w\in\mathcal{W}_1$, this word has $111$ as both a prefix and a suffix.  
 Since $11$ does not occur as a subword of $y$, we know $11$ does not occur in $L$ and also does not occur in $R$.  
 By construction $(La_nR)(0)=(La_nR)(|w|+1)=0$, and so any location where $11$ occurs in $La_nR$ must be entirely within $a_n$ (in other words it cannot partially overlap $L$ and $a_n$ or $a_n$ and $R$).  Similarly, the suffix $11$ of $w$ must occur in $a_n$, meaning $w$ is a subword of $a_n$.  Therefore $|w|\leq|a_n|$.  But $w=u_m$ for some $m\in\N$ and $|a_n|=|u_n|-1$, and so $m<n$.  That means that $|w|\leq|u_n|-4=|a_n|-3$.  But $11$ only occurs as a subword of $u_n$ in the positions~\eqref{pos1},~\eqref{pos2}, and~\eqref{pos3} in the construction in Section~\ref{subsec:W1}.  Because $|w|\geq6$, the prefix $11$ of $w$ must occur within the prefix $111$ of $a_n$ and the suffix $11$ of $w$ must occur within the suffix $11$ (if $u_n$ ends with $0111$) or the suffix $111$ (if $u_n$ ends with $01111$) of $a_n$.  Therefore $|w|\geq|a_n|-2$; a contradiction. 

\subsubsection*{Claim 2: If $w\in\mathcal{W}_2^{11}$, then $w$ does not occur in $La_nR$.}
 As in the proof of Claim 1, since $w$ begins and ends with $11$, any occurrence of $w$ in $La_nR$ must be within $a_n$. 
 Therefore, since $a_n$ is a subword of $u_n$, the word $w$ must occur as a subword of $u_n$.  Recall that the words in $\mathcal{W}_2^{11}$ are periodic, begin with the word $11$, and repeat their period at least $100$ times.  Therefore, if $w$ is a subword of $u_n$, there must be at least $100$ separate locations in $u_n$ where $11$ occurs as a subword.  But the word $11$ only occurs as a subword of $u_n$ in positions~\eqref{pos1},~\eqref{pos2}, and~\eqref{pos3} in the construction in Section~\ref{subsec:W1}.  Therefore $w$ cannot occur as a subword of $u_n$ and so also can not occur as a subword of $La_nR$.

\subsubsection*{Claim 3: If $w\in\mathcal{W}_2^{00}$, then $w$ does not occur in $La_nR$.}
Again we proceed by contradiction and assume that $w$ does occur in $La_nR$.  Recall from the construction of $w$ in Section~\ref{sec:zerozero} that there is a word $\alpha_i$ starting with $00$ and a parameter $\ell_i\in\N$ sufficiently large such that $\alpha_i^{\ell_i}$ does not occur as a subword of $y$.  Then $w=\alpha_i^{100\ell_i}00$.  Since $\alpha_i^{\ell_i}$ is not a subword of $y$, it is also not a subword of $L$ or $R$.  
Thus if $w$ occurs in $La_nR$ the longest prefix of $w$ that appears in $L$ has length less than $|w|/100$ and the longest suffix of $w$ that appears in $R$ has length less than $|w|/100$.  Therefore, a subword of $w$ of length at least $98|w|/100$ occurs as a subword of $a_n$.  As $a_n$ is a subword of $u_n$, a subword of $w$ of length at least $98|w|/100$ appears as a subword of $u_n$.  
Recall that $u_n$ has a prefix $111$ and a suffix $111$, but if both of these are removed from $u_n$, the remaining subword is a subword of $y$.  Since there is a subword of $w$ of length at least $98|w|/100$ that occurs as a subword of $u_n$, then a subword of $w$ of length at least $92|w|/100$ appears as a subword of $y$.  In particular, this means $\alpha_i^{\ell_i}$ appears as a subword of $y$; a contradiction.

\subsubsection*{Claim 4: If $w\in\mathcal{W}_2^{000}$, then $w$ does not occur in $La_nR$.}
The word $000$ does not occur as a subword of $y$, and so it is not a subword of either $L$ or $R$.  Since $a_n$ begins and ends with the word $11$, there is no occurrence of $000$ in $La_nR$ that partially overlaps $L$ and $a_n$ or $a_n$ and $R$.  
Finally, $000$ does not occur as a subword of $a_n$ because $a_n$ is a subword of $u_n$ and $u_n$ decomposes as $u_n=111\gamma111$ where $\gamma$ is a subword of $y$.  Therefore $000$ does not occur as a subword of $La_nR$.  
Thus if $000$ is a subword of $w$, then $w$ cannot be a subword of $La_nR$.  The only word in $\mathcal{W}_2^{000}$ that does not contain $000$ as a subword is the word $\beta_0=(01)^{100k}$, where $k\in\N$ is so large that $(01)^k$ does not occur as a subword of $y$.  Analogous to the argument for the words in $\mathcal{W}_2^{00}$, if $\beta_0$ occurred in $La_nR$, then a subword of $\beta_0$ of length at least $98|\beta_0|/100$ would have to occur as a subword of $u_n$ and hence a subword of $\beta_0$ of length at least $92|\beta_0|/100$ would be a subword of $y$.  In particular, $(01)^k$ would be a subword of $y$; a contradiction.

As these four cases cover all of the elements $\mathcal{W}_1\cup\mathcal{W}_2^{11}\cup\mathcal{W}_2^{000}\cup\mathcal{W}_2^{00}$, this completes the proof.
\end{proof} 

\begin{lemma}\label{lem:period-stable}
$\mathcal{X}_4$ is period stable.
\end{lemma}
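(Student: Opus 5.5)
The plan is to use the extra length condition imposed when each $s_{t+1}$ was chosen, and then pass from the approximating shifts $\mathcal{X}_3(t+1)$ to $\mathcal{X}_4$ itself. Since stability for infinitely many $m$ implies stability for all $m$, it suffices to show the following: for each $t$, with $N:=N_{t+1}$ and $p$ the minimal period of $r_{t+1}$, we have $\CP_p(\mathcal{S}_{N})=\CP_p(\mathcal{S}_{N+t})$, where $(\mathcal{S}_n)$ is the SFT cover of $\mathcal{X}_4$.

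\textbf{Step 1: reduce the cover of $\mathcal{X}_4$ to the cover of $\mathcal{X}_3(t+1)$ at short lengths.} Every word in $\mathcal{F}$ not yet forbidden in $\mathcal{X}_3(t+1)$ is some $\beta_j$ with $j\geq t+2$. These have length greater than $|\beta_{t+1}|>N+t+1$. I would check that $\mathcal{L}_n(\mathcal{X}_4)=\mathcal{L}_n(\mathcal{X}_3(t+1))$ for all $n\leq N+t$, so that $\mathcal{S}_n=\mathcal{Y}_n(t+1)$ in that range.

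The containment $\subseteq$ is clear. For $\supseteq$, given a short word $v$ in $\mathcal{L}(\mathcal{X}_3(t+1))$, I would embed it into a point of $\mathcal{X}_4$. Mixing of $\mathcal{X}_3$ alone does not suffice, since later $\beta_j$ could in principle appear. Instead I would use the positive entropy of $\mathcal{X}_4$ (Lemma~\ref{lem:nonemtpy}) together with an $L\,\cdot\,R$-type gluing, as in Lemma~\ref{lem:not-language-stable}. Concretely, I would surround $v$ by long runs coming from $y$ or $\mathcal{H}_3$-type material. Any later $\beta_j$ is a power of a word starting with $000$ repeated at least $100s_j$ times with $x_j^{s_j}$ not in $y$, so it cannot straddle such a gluing. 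This is the step I expect to be the main obstacle. If the direct language equality fails, the fallback is to compare only the periodic points of period $p$: such a point contains only subwords of period $p$, and these are not among the later $\beta_j$ because $p<|x_j|$ for $j\geq t+2$ by the minimality choices.

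\textbf{Step 2: apply the claim already proven in the construction.} That claim states that the minimal-period periodic points of $\mathcal{Y}_{N}(t+1)$ and $\mathcal{Y}_{N+t}(t+1)$ coincide. I would pin down that this minimal period is $p$. Periodic points in $\mathcal{Y}_N(t)$ outside $\mathcal{R}_{t+1}$ have period greater than $p$. Meanwhile $r_{t+1}$ lies in $\mathcal{Y}_N(t+1)$, because its forbidden subwords contain $\beta_{t+1}$, which has length greater than $N$, and the proper subwords $\gamma_{t+1},\zeta_{t+1}$ are admissible. So $\CP_p$ is nonempty and equal at levels $N$ and $N+t$.

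Combining the two steps gives $\CP_p(\mathcal{S}_N)=\CP_p(\mathcal{S}_{N+t})$ for every $t$, which yields period stability for all $m$.
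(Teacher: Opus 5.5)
Your skeleton is sensible, and Step 2 is just the claim recorded during the construction of $\beta_{t+1}$. But the proposal does not prove the lemma, because Step 1 is the crux and it is left open.

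\textbf{Step 1 is not established.} To get $\mathcal{L}_n(\mathcal{X}_4)=\mathcal{L}_n(\mathcal{X}_3(t+1))$ for $n\le N+t$, you must extend an arbitrary short word $v\in\mathcal{L}(\mathcal{X}_3(t+1))$ to a point of $\mathcal{X}_4$. Your gluing sketch only shows that a \emph{later} $\beta_j$ cannot straddle a seam. It does not prevent a seam from completing a forbidden word that $v$ itself avoids. The word $\beta_0$, the early $\alpha_i$ and the early $\beta_j$ are fixed while $N+t\to\infty$, so $v$ may end with all but the last few letters of such a word. For instance, nothing in your sketch excludes $v$ ending in $\alpha_i$ with its final $0$ deleted. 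Then $vR$, with $R$ the right ray of $y$ used in Lemma~\ref{lem:not-language-stable}, contains $\alpha_i$, because $R$ begins with $y(1)=0$. In Lemma~\ref{lem:not-language-stable} the corresponding seam problems were handled using the special shape of $a_n$ ($111$ at both ends, a piece of $y$ in between). For a general $v$ the flanking material has to be chosen depending on $v$, with an argument that some choice always works, and that argument is missing. Flanking by $\mathcal{H}_3$-type material gives no control at all, since $\mathcal{H}_3\not\subseteq\mathcal{X}_4$: for example, $0^\infty\in\mathcal{H}_3$ is removed only by a $\beta_j$.

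\textbf{The fallback does not repair this.} First, $p<|x_j|$ for $j\ge t+2$ does not follow from the minimality choices. Minimal periods are only non-decreasing, and if $\mathcal{X}_3(t)$ has two orbits of period $p$, then $r_{t+2}$ again has minimal period $p$. This is exactly why the paper passes to a subsequence $t_r$ with $M_{t_r}<M_{t_r+1}$. Then every minimal-period point of $\mathcal{X}_3(t_r-1)$ contains $\beta_{t_r}$. It cannot contain any $\beta_{t_r+j}$: a long window of it contains $\beta_{t_r}$, while $\beta_{t_r+j}\in\mathcal{L}(\mathcal{X}_3(t_r+j-1))$, where $\beta_{t_r}$ is forbidden.

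Second, and more fundamentally, knowing which words of $\mathcal{F}$ a periodic point contains does not decide whether it lies in $\mathcal{S}_n$. The forbidden words of $\mathcal{S}_n$ are all words of length at most $n$ outside $\mathcal{L}(\mathcal{X}_4)$. The point contains $\beta_{t+1}$, so it is not in $\mathcal{X}_4$, and you still need its subwords of length at most $n$ to extend to points of $\mathcal{X}_4$.

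The paper's proof makes this passage (from ``contains no word of $\mathcal{F}$ of length at most $N_{t_r}+t_r$'' to membership in the SFT cover of $\mathcal{X}_4$) without comment. Your Step 1, at least for subwords of the relevant periodic points, is precisely what would justify it. Were it proved, your route would be cleaner than the paper's, since it would need neither the subsequence nor the nesting argument. As it stands, though, that step is unproved.
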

\begin{proof}
Let $\{X_n\}_{n=1}^{\infty}$ be the SFT cover of $\mathcal{X}_4$.  Recall that 
$$
\mathcal{X}_4=\bigcap_{t=0}^{\infty}\mathcal{X}_3(t) 
$$
where $\mathcal{X}_3(0)\supset\mathcal{X}_3(1)\supset\dots\supset\mathcal{X}_3(t)\supset\dots$.  In the notation of Section~\ref{sec:zerozerozero}, recall that  for any $t$, $(\mathcal{Y}_n(t))_{n\in\N}$ is the SFT cover of $\mathcal{X}_3(t)$ and that $N_t$ is a parameter constructed such that the collection of periodic points of minimal period in $\mathcal{Y}_{N_t}(t)$ concides with those of minimal period in $\mathcal{Y}_{N_t+t}(t)$.  
Further, for any $t,k\in\N$, the shift $\mathcal{X}_3(t+k)$ is obtained from $\mathcal{X}_3(t)$ by forbidding the words $\beta_{t+1},\beta_{t+2},\dots,\beta_{t+k}$ and by construction, $|\beta_{t+j}|>2^j|\beta_t|$ for all $j>0$.  For each $t\in\N$, let $M_t$ be the minimal period of any periodic point in $\mathcal{Y}_{N_t}$.  Note that $M_t$ is also the minimal period of any periodic point in $\mathcal{X}_3(t-1)$ and that collection of periodic points of period $M_t$ in $\mathcal{X}_3(t)$ is strictly smaller than that of $\mathcal{X}_3(t-1)$ (because $\beta_t$ is specifically constructed to eliminate one of these periodic points when defining $\mathcal{X}_3(t)$). 
Therefore, the sequence $\{M_t\}_{t=1}^{\infty}$ is non-decreasing and tends to infinity.  For any value of $t$ with  $M_{t+1}>M_t$, all periodic points of period $M_t$ in $\mathcal{X}_3(t-1)$ contain $\beta_t$ as a subword.  Let $\{t_r\}_{r=1}^{\infty}$ be a subsequence for which $M_{t_r}<M_{t_r+1}$ for all $r\geq 1$.  Then for any $r$, we know 
    \begin{enumerate}
    \item the minimum possible period of any periodic point in $\mathcal{X}_3(t_r-1)$ is $M_{t_r}$; 
    \item all periodic points of period $M_{t_r}$ in $\mathcal{X}_3(t_r-1)$ contain $\beta_{t_r}$ as a subword; 
    \item all periodic points of period $M_{t_r}$ in $\mathcal{X}_3(t_r-1)$ are in $\mathcal{Y}_{N_{t_r}}(t_r)$ and in $\mathcal{Y}_{N_{t_r}+t_r}(t_r)$; 
    \item all periodic points in $\mathcal{X}_3(t_r)$ have period strictly larger than $M_{t_r}$; 
    \item $|\beta_{t_r+j}|>2^j|\beta_{t_r}|$ for all $j>0$.
    \end{enumerate}
Let $p$ be a periodic point of period $M_{t_r}$ in $\mathcal{X}_3(t_r-1)$.  While $p$ must contain $\beta_{t_r}$ as a subword, we claim that it does not contain $\beta_{t_r+j}$ as a subword for any $j>0$.  Since $|\beta_{t_r}|>N_{t_r}+t_r$, it follows from this and the fact that $p\in\mathcal{Y}_{N_{t_r}+t_r}(t_r)$ that $p\in X_{N_{t_r}}$ and $p\in X_{N_{t_r}+t_r}$.  On the other hand, every periodic point of minimum period $M_{t_r}$ in $X_{N_{t_r}}$ must be in $\mathcal{X}_3(t_r-1)$ because $\mathcal{Y}_{N_{t_r}}(t_r)$ is a subset of $X_{N_{t_r}}$.  So, provided we can establish the claim, it follows that the periodic points of minimum possible period in $X_{N_{t_r}}$ coincide with those in $X_{N_{t_r}+t_r}$.  Since $\{t_r\}_{r=1}^{\infty}$ in strictly increasing, it follows that $\mathcal{X}_4$ is period stable.  
Thus we are left with showing that $p$ does not contain any of the words $\beta_{t_r+j}$ for $j>0$.  As noted, $p$ does contain $\beta_{t_r}$ as a subword.  Moreover, $|\beta_{t_r}|>|M_{t_r}|$ (the minimal period of $p$).  By construction, $|\beta_{t_r+j}|>2^j|\beta_{t_r}|$ and so if $p$ also contained $\beta_{t_r+j}$ as a subword, then $\beta_{t_r+j}$ would itself contain $\beta_{t_r}$ as a subword.  But $\beta_{t_r+j}$ is a word in the language of the subshift $\mathcal{X}_3(t_r+j)$ and $\beta_{t_r}$ is a forbidden word in $\mathcal{X}_3(t_r+j)$; a contradiction.  We conclude that $\beta_{t_r+j}$ cannot be a subword of $p$ for any $j>0$.
\end{proof}

\subsection{Construction of such a system with nontrivial automorphism group}
\label{sec:nontrivial}
In Section~\ref{sec:construction} we constructed an example of a symbolic system, $(X,T)$, with the following properties: 
    \begin{enumerate}[label=(\roman*)]
    \item 
    \label{item:new-one}
    $(X,T)$ is period stable; 
    \item $(X,T)$ is not language stable; 
    \item 
     \label{item:new-three} 
     $(X,T)$ has no periodic points. 
    \end{enumerate}
Interest in $(X,T)$ is that by Theorem~\ref{th:periodic} it carries a characteristic measure that is not seen to exist for other reasons,  such as being language stable or having a periodic point.  However, it is not clear what the automorphism group of $(X,T)$ is, and for example, if $\Aut(X)$ were to be amenable, the existence of a characteristic measure follows from the Krylov-Bogolyubov Theorem.  To address this possibility,  we show how to use $(X,T)$ to construct a  system $(Z,T)$ that retains properties~\eqref{item:new-one}--\eqref{item:new-three} of $(X,T)$ and furthermore has a non-amenable automorphism group.  Specifically, if $(X, T)$ denotes the system constructed in Section~\ref{sec:construction} (as defined in~\eqref{def:X4}) and if $(Y, T)$ denotes the full shift on two symbols, we show that the subshift $Z=X\times Y$ has the following properties: 
    \begin{enumerate}[label=(\roman*)]
    \item $(Z,T)$ is period stable; 
    \item $(Z,T)$ is not language stable; 
    \item $(Z,T)$ has no periodic points; 
    \item $\Aut(Z)$ contains the free group on two generators (and so in particular is not amenable).
    \end{enumerate}

  \subsubsection{(Z,T) has no periodic points} The system $(X, T)$ has no periodic points by construction, and since any periodic point in $Z$ projects to a periodic point in $X$, there are no periodic points in $(Z, T)$.  

  \subsubsection{(Z,T) is not language stable} Let $\{X_n\}_{n=1}^\infty$ denote the SFT cover of $X$ and 
  $\{Z_n\}_{n=1}^\infty$ denote the SFT cover of $Z$. As the full shift $(Y, T)$ has no forbidden words, it follows that 
  $Z_n = X_n\times Y$ for all $n\in\N$.  
  As the system $(X, T)$ is not language stable, this then implies that $(Z,T)$ is not language stable. 

  \subsubsection{(Z,T) is period stable} Let $p_n$ denote the smallest period of any periodic point in the $n$-th subshift $(X_n, T)$ in the SFT cover of $(X, T)$ and let 
  $P_n(X)$ denote the set of periodic points of period $p_n$ in $X_n$. 
 Since $(X, T)$ is period stable, given  $k\geq 1$ there is some $n\in\N$ such that $P_n(X) = P_{n+k}(X)$.  Then if $x\in P_n$ and $y\in Y$ is a fixed point,  the point $(x, y)\in Z$ has period $p_n$ and so $Z_n$ has periodic points of period $p_n$. Moreover, no periodic point $(x', y')\in Z_n$ can have 
 period lower than $p_n$, as otherwise $x'\in X_n$ would have a period smaller than $p_n$.  Thus $p_n$ is the smallest period of any periodic point in $Z_n$.  
 
  If $(x,y)\in Z_n$ is some periodic point of period $p_n$, 
  then $x\in X_n$ and $y\in Y$ are periodic, and the period of each divides $p_n$. Since $p_n$ is the least period of any periodic point in $X_n$, it follows that $x$ has period $p_n$.  Thus the set of periodic points in $Z_n$ with period $p_n$ is equal to the set of all points  $(x, y)$  with $x\in P_n(X)$ and $y\in Y$ having period dividing $p_n$. 
  Similarly, the set of periodic points of period $p_n$ in $Z_{n+k}$ is also the set of all points  $(x, y)$  with $x\in P_{n+k}(X)= P_n(X)$ and $y\in Y$ having period dividing $p_n$.  (We note that there is no further restriction on the second coordinate as $Y$ is a full shift.) 
  Thus if follows that $(Z, T)$ is period stable.

  \subsubsection{Aut(Z) contains the free group on two generators} We have  that $\Aut(Z)$ contains $\Aut(X)\times \Aut(Y)$ as a subgroup.  Since $\Aut(Y)$ is contains the free group on two generators, it follows that $\Aut(Z)$ also does (and so is not amenable).

\end{document}